\documentclass[11pt,reqno]{amsart}
\usepackage{amscd, amsmath, amssymb, amsthm, enumerate, multirow,epigraph}
\usepackage[linktocpage=true]{hyperref}
\hypersetup{colorlinks,linkcolor=blue,urlcolor=blue,citecolor=blue}
\usepackage{chngcntr}
\usepackage[table]{xcolor}
\usepackage{stmaryrd} 
\usepackage{bbm} 
\usepackage{leftidx} 
\usepackage{mathabx} 
\usepackage{rotating} 
\usepackage{wasysym} 
\usepackage[mmddyy]{datetime}
\usepackage{qtree}
\usepackage{float} 
\usepackage[all]{xy}
\usepackage{arydshln}
\makeatletter
\renewcommand*\env@matrix[1][*\c@MaxMatrixCols c]{%
  \hskip -\arraycolsep
  \let\@ifnextchar\new@ifnextchar
  \array{#1}}
\makeatother
\makeatletter
\newcommand{\labeltarget}[1]{\Hy@raisedlink{\hypertarget{#1}{}}}
\makeatother
\usepackage{todonotes}
\usepackage{pgf, tikz, colortbl, braids}
\usetikzlibrary{cd}
\tikzset{
	ch/.style={circle,draw,on chain,inner sep=2pt},
	chj/.style={ch,join},
	every path/.style={shorten >=4pt,shorten <=4pt}
	}

\numberwithin{equation}{subsection}
\counterwithin{table}{subsection}
\counterwithin{figure}{subsection}
\usepackage[margin = 0.9in]{geometry}
\def\<{\langle}
\def\>{\rangle}
\def\0{{\bar 0}}
\def\1{{\bar 1}}

\newcommand{\cC}{\mathcal{C}}

\newcommand{\cF}{\mathcal F}
\newcommand{\cH}{\mathcal{H}}

\newcommand{\cO}{\mathcal{O}}

\newcommand{\CC}{\mathbb{C}}

\newcommand{\Dirr}{\Delta_{\mathrm{irr}}}

\newcommand{\Ext}{\mrm{Ext}}

\newcommand{\ep}{\epsilon}

\newcommand{\fa}{\ensuremath{\mathfrak{a}}}

\newcommand{\e}{\ensuremath{\mathfrak{e}}}
\newcommand{\f}{\ensuremath{\mathfrak{f}}}
\newcommand{\g}{\ensuremath{\mathfrak{g}}}
\newcommand{\fg}{\ensuremath{\mathfrak{g}}}

\newcommand{\fn}{\ensuremath{\mathfrak{n}}}

\newcommand{\fp}{\ensuremath{\mathfrak{p}}}

\newcommand{\ft}{\ensuremath{\mathfrak{t}}}

\newcommand{\Hom}{\mrm{Hom}}

\newcommand{\mrm}{\mathrm}

\newcommand{\ninj}{\nabla_{\mathrm{inj}}}

\newcommand{\tif}{\textup{if }}

\newcommand{\ZZ}{\mathbb{Z}}

\usepackage{enumerate}
\newenvironment{eq}{\begin{equation}}{\end{equation}}

\theoremstyle{definition}

\newtheorem{Def}{Definition}[subsection]
\newtheorem{definition}[Def]{Definition}
\newtheorem{defn}[Def]{Definition}

\newtheorem{ex}[Def]{Example}

\theoremstyle{plain}
\newtheorem{prop}[Def]{Proposition}
\newtheorem{thm}[Def]{Theorem}

\newtheorem{theorem}[Def]{Theorem}
\newtheorem{lemma}[Def]{Lemma}

\newtheorem{cor}[Def]{Corollary}

\begin{document}
\title{Cohomology and Representation Theory for Lie Superalgebras}

\author{\sc Nikolay Grantcharov}
\address
{Department of Mathematics\\ University of Georgia \\Athens\\ GA~30602, USA}
\email{nikolayg@uga.edu}

\author{\sc Daniel K. Nakano}
\address
{Department of Mathematics\\ University of Georgia \\Athens\\ GA~30602, USA}
\thanks{Research of the second author was supported in part by
NSF grant DMS-2401184}
\email{nakano@uga.edu}

\maketitle

\begin{abstract} In this paper the authors describe two general frameworks for studying algebraic supergroup and Lie superalgebra representations. These encompass the 
well-known rational representations for a reductive algebraic group and the parabolic Category ${\mathcal O}$ for semisimple Lie algebras. Many of the important developments will be surveyed 
and a blueprint for these new approaches will be provided. 

The existence of odd elements simply add an additional layer of difficulty. 
The authors prove that the calculation of the sheaf cohomology in the super setting for the rational category is equivalent to understanding the Lie superalgebra cohomology. This 
generalizes the well-known fact that the classical Bott-Borel-Weil theorem is equivalent to Kostant's theorem. Furthermore, for the parabolic Category ${\mathcal O}^{\mathfrak p}$
where ${\mathfrak p}$ is a principal parabolic, the authors show that the characters of simple modules can be obtained by looking at natural generalizations of Kazhdan-Lusztig polynomials given by 
extensions between simple modules and costandard modules arising from the underlying costandardly stratified category. 
\end{abstract} 
\vskip .5cm 
\epigraph{\textit{Cohomology is Representation Theory.}}{---J.L. Alperin}

\section{Introduction}

\subsection{Reductive groups and Lie algebras}\label{S:reductive} Let $G_{\bar 0}$ be a reductive group over ${\mathbb C}$ and ${\mathfrak g}_{\0}$=\text{Lie\ $G_{\0}$} be its Lie algebra. Historically, the rational category 
$\text{Mod}(G_{\0})$ and the (parabolic) Category ${\mathcal O}^{{\mathfrak p}_{\0}}$ have been the main categories of interest in representation theory for these groups and Lie algebras. 
Geometric, categorical and homological tools have been extensively developed to understand the characters of irreducible representations and decomposition numbers. This has led to further developments for Kac-Moody groups and their Lie algebras (cf. \cite{Ku02}). 

Let $B_{\0}$ be a Borel subgroup corresponding to the negative roots for the underlying root system $\Phi_{\bar 0}$. A fundamental result in the area is the Bott-Borel-Weil (BBW) theorem which gives a precise calculation of the sheaf cohomology of the line bundle ${\mathcal L}(\lambda)$ over the flag variety $G_{\0}/B_{\0}$ denoted by ${\mathcal H}^{\bullet}(G_{\0}/B_{\0},{\mathcal L}(\lambda))$. A precise statement 
can be found in  \cite[II 5.5 Corollary]{Jan03}. It is a well-known fact over the complex numbers that the BBW theorem is equivalent to Kostant's theorem on the Lie algebra cohomology. 

Given ${\mathfrak p}_{\0}$ a parabolic subalgebra of ${\mathfrak g}$, the BGG Category ${\mathcal O}^{\mathfrak p_{\0}}$ is a category that has been well-studied. For a standard reference, see 
\cite{Hum08}. The Category ${\mathcal O}^{{\mathfrak p}_{\0}}$ is a highest weight category. Furthermore, the decomposition numbers and the characters of the irreducible representations are known by validity of the Kazhdan-Lusztig conjectures. 

\subsection{Algebraic supergroups and Lie superalgebras} 

There is a vast amount of literature on supergroups and Lie superalgebras representations over the complex numbers. Many of the results have focused on studying the representation theory of a specific 
algebra. From a general point of view, much of this activity has fallen under looking at either rational representations for algebraic supergroups or various Category ${\mathcal O}$ theories for Lie superalgebras. 

The rational category for algebraic supergroup $\text{Mod}(G)$ (when $G_{\0}$ is reductive) directly generalizes the rational category for reductive groups over ${\mathbb C}$. However, in general, the rational 
category $\text{Mod}(G)$ is not semisimple, and has a rich cohomology theory. Representations in this category can be studied using techniques from modular representation theory. In particular, techniques from the 
theory of algebraic groups over fields of characteristic $p>0$ transport well into this setting. Boe, Kujawa and Nakano \cite{BKN10a} initiated the study of detecting subgroups/subalgebras in the late 2000's that has been further refined by Serganova and Sherman \cite{SS26}. These detecting and splitting subgroups behave like defect subgroups in modular finite group representation theory. 

A new parabolic category ${\mathcal O}^{\mathfrak p}$  theory has been recently introduced by Lai, Nakano and Wilbert \cite{LNW25} that was inspired by other prior versions developed by Mazorchuk \cite{Maz14} and Chen, Cheng and Coulembier \cite{CCC21}. In the new version, the emphasis has been on using the 
idea of principal parabolic subalgebras as introduced by D. Grantcharov and Yakimov \cite{GY13}  in order to relate ${\mathcal O}^{\mathfrak p}$ to the parabolic category ${\mathcal O}^{{\mathfrak p}_{\0}}$ for the reductive Lie algebra ${\mathfrak g}_{\0}$. 
In particular, one can show that the cohomology ring for ${\mathcal O}^{\mathfrak p}$ 
$$\text{Ext}^{\bullet}_{{\mathcal O}^{\mathfrak p}}({\mathbb C},{\mathbb C}) \cong S^{\bullet}({\mathfrak g}_{\1}^{*})\otimes \text{Ext}^{\bullet}_{{\mathcal O}^{{\mathfrak p}_{\0}}}({\mathbb C},{\mathbb C})$$ 
as $S^{\bullet}({\mathfrak g}_{\1}^{*})$-modules. Here  
$ \text{Ext}^{\bullet}_{{\mathcal O}^{{\mathfrak p}_{\0}}}({\mathbb C},{\mathbb C})$ is the cohomology ring for the parabolic category ${\mathcal O}^{{\mathfrak p}_{\0}}$ for the Lie algebra ${\mathfrak g}_{\0}$. 
The new framework has also facilitated the study of the category and its blocks, along with the development of cohomological and geometric methods to study representations for ${\mathcal O}^{\mathfrak p}$.

\subsection{} The paper is organized as follows. In the following section (Section~\ref{S:Rat}), we present the setting for rational $G$-modules where $G$ is an affine supergroup scheme with examples of 
ones that admit quasi-reductive Lie superalgebras. In this section, we also discuss the comparison theorem involving sheaf cohomology for $G$ and its relationship to the cohomology of vector bundles over $G_{\0}$. 
The first main theorem relating the composition factors of the sheaf cohomology for $G$ and the Lie superalgebra cohomology 
is presented in Section~\ref{S:BBW-Kostant}. This general result encompasses the classical Bott-Borel-Weil theorem and Kostant's theorem on Lie algebra cohomology. 

The theory of detecting subalgebra and splitting subalgebras is elaborated on in Section~\ref{S:grouprep}. Support varieties are described which enable one to give a geometric interpretation for the atypicality of a 
module over the supergroup $G$ or the Lie superalgebra ${\mathfrak g}$.   

In Section~\ref{S:catO}  we give an overview of the parabolic Category ${\mathcal O}^{\mathfrak p}$.  For the general superalgebra case, the blocks are costandardly stratified and the cohomology can live in infinitely many degrees.  An appropriate notion of complexity of modules can be defined in this setting and leads to some very intriguing questions. Finally, the authors tie in the character theory on Category ${\mathcal O}^{\mathfrak p}$ in Section~\ref{S:KL} with extensions between simples and costandard modules. These results are suggestive of the existence of a general Kazhdan-Lusztig theory.

\subsection{Acknowledgements} The paper is dedicated to Jonathan L. Alperin. The first author received his Ph.D. in mathematics from the University of Chicago in 2025, and appreciates Jon's legacy in the department. The second author met Jon when he was an NSF postdoctoral fellow at Northwestern University in the 1990s. Early on, he read Jon's writings on group representation theory especially his article ``Cohomology is Representation Theory" and his book ``Local Representation Theory". Section~\ref{S:grouprep} and Section~\ref{SS:complexity}, indicate how Jon's research perspective has influenced the new directions in Lie superalgebra representation theory.


\section{Rational $G$-modules}\label{S:Rat}

\subsection{Rational Representations} Let $G$ be a smooth affine supergroup scheme over ${\mathbb C}$ and $\text{Mod}(G)$ 
be the category of rational modules for $G$. Let $H$ be a closed subgroup scheme of $G$. One has a natural induction functor 
$\text{ind}_{H}^{G}(-)$ which is left exact. Set $R^{j}\text{ind}_{H}^{G}(-)$ to be the $j$th higher right derived functors of the induction functor. 
The reader is referred to  \cite[Sections 2,4,5]{BrKl03}, \cite[Section 2]{Br04} for general facts about supergroup schemes and induction. 

Let ${\mathfrak g}=\text{Lie } G$ with ${\mathfrak g}={\mathfrak g}_{\0}\oplus {\mathfrak g}_{\1}$. Moreover, let 
$U({\mathfrak g})$ be the universal enveloping superalgebra of ${\mathfrak g}$. Supermodules over the Lie superalgebra ${\mathfrak g}$ 
can be regarded as unital modules for $U({\mathfrak g})$. Since $U({\mathfrak g})$ as a super Hopf algebra, if  $M$ and $N$ are ${\mathfrak g}$-modules one can 
define a ${\mathfrak g}$-module structure on the dual $M^{*}$ and the tensor product $M\otimes N$. Similar statements can be made for tensor product and dual module constructions in 
$\text{Mod}(G)$. 

One of the main tools has been the use of relative cohomology for the pair $({\mathfrak g},{\mathfrak g}_{\0})$. The projective objects are relatively projective $U({\mathfrak g}_{\0})$-modules. 
Furthermore, every $U({\mathfrak g})$-module admits a relatively projective $U({\mathfrak g}_{\0})$-resolution. By using these facts, given ${\mathfrak g}$-modules, 
$M, N$, one can define the relative extension groups $\text{Ext}^{n}_{({\mathfrak g},{\mathfrak g}_{\0})}(M,N)$ by taking a relatively projective $U({\mathfrak g}_{\0})$-resolution for $M$. 
For the purpose of computations, it is important to note that the relative Lie superalgebra cohomology which can be computed using an explicit Koszul type complex. The reader is referred to 
\cite[3.1.8 Corollary, 3.1.15 Remark]{Ku02},  \cite[Section 2.3]{BKN10a} for further details. 

Let $G$ be a quasi-reductive algebraic supergroup scheme over ${\mathbb C}$ where $G_{\0}$ is a reductive algebraic algebraic group. 
The category $\text{Mod}(G)$ is equivalent to locally finite integral modules for $\text{Dist}(G)=U({\mathfrak g})$ (cf. \cite[Corollary 5.7]{BrKl03}). Moreover, 
$\text{Mod}(G)$ is equivalent to ${\mathcal C}_{({\mathfrak g},{\mathfrak g}_{\0})}$, the category of ${\mathfrak g}$-supermodules that are completely reducible over ${\mathfrak g}_{\0}$, whose projectives 
are relatively projective $U({\mathfrak g}_{\0})$-modules. In this case, one can give an interpretation of the rational $G$-cohomology as follows. If $M$ and $N$ are rational $G$-modules then for all $n\geq 0$ 
\begin{equation}
\Ext^{n}_{G}(M,N)\cong \text{Ext}^{n}_{({\mathfrak g},{\mathfrak g}_{\0})}(M,N).
\end{equation}

\subsection{Quasi-reductive Supergroups and Lie Superalgebras}  Let $G$ be a quasi-reductive supergroup and with Lie superalgebra 
${\mathfrak g}={\mathfrak g}_{\0}\oplus {\mathfrak g}_{\1}$. Formally, this means that $G_{\0}$ is reductive and 
the action of $G_{\0}$ on ${\mathfrak g}_{\1}$ differentiates to the adjoint action of ${\mathfrak g}_{\0}$ on ${\mathfrak g}_{\1}$.  
Often times, the Lie superalgebra is also referred to as {\em quasi-reductive} or {\em classical}. A Lie superalgebra ${\mathfrak g}$ is  \emph{basic classical}
if it is classical with a nondegenerate invariant supersymmetric even bilinear form. 

Over the complex numbers, for ``simple" Lie superalgebras not of Cartan type, there are related quasi-reductive supergroup schemes. 
The quasi-reductive ``simple" Lie superalgebras  are not always simple, but are close enough to being simple. The simple finite-dimensional 
Lie superalgebra over ${\mathbb C}$ have been classified by Kac (cf. \cite{Kac77}). 

We have listed below the Lie superalgebras have appeared frequently in the literature and are of general interest:  
\begin{itemize} 
\item $\mathfrak{gl}(m|n)$, $\mathfrak{sl}(m|n)$, $\mathfrak{psl}(n|n)$\ \ \  [{\em general and special linear Lie superalgebras}],
\item $\mathfrak{osp}(m,n)$\ \ \ [{\em ortho-symplectic Lie superalgebras}],
\item $D(2,1,\alpha)$, $F(4)$, $G(3)$\ \ \ [{\em exceptional Lie superalgebras}],
\item $\mathfrak{q}(n)$, $\mathfrak{psq}(n)$\ \ \ [{\em queer Lie superalgebras}],
\item ${\mathfrak p}(n)$, $\widetilde{{\mathfrak p}}(n)$\ \ \ [{\em periplectic Lie superalgebras}].
\end{itemize} 
For the queer Lie superalgebras, $\mathfrak{q}(n)$ will be the Lie superalgebra with even and odd parts $\mathfrak{gl}_n$, while $\mathfrak{psq}(n)$ is the corresponding simple subquotient of $\mathfrak{q}(n)$  
The periplectic Lie superalgebras include ${\mathfrak p}(n)$ with even component $\mathfrak{sl}_{n}$ and its enlargement $\widetilde{{\mathfrak p}}(n)$ with  even component $\mathfrak{gl}_n$.

 \subsection{Comparison of Sheaf Cohomology} Let $G$ be a supergroup scheme and $H\leq G$ where $H$ is a closed subgroup scheme.  Galban and Nakano \cite[Theorem 3.1.1]{GN23} showed there exists a spectral sequence converging to 
 $R^{\bullet}\text{ind}_{H}^{G} M$ when considered as a $G_{\0}$-module in terms of sheaf cohomology groups $R^{\bullet}\text{ind}_{H_{\0}}^{G_{\0}}(-)$. 
 
\begin{theorem}\label{T:spectralseq1} Let $G$ be a supergroup scheme and $H$ be a closed subgroup scheme of $G$, with ${\mathfrak g}=\operatorname{Lie }{\mathfrak g}$ and
${\mathfrak h}=\operatorname{Lie }{\mathfrak h}$. If $M$ be a $H$-module then there exists a spectral sequence 
$$E_{1}^{i,j}=R^{i+j}\operatorname{ind}_{H_{\0}}^{G_{\0}} [M\otimes \Lambda^{j}({\mathfrak g}_{\1}/{\mathfrak h}_{\1})^{*}] \Rightarrow [R^{i+j}\operatorname{ind}_{H}^{G}M]|_{G_{\0}}.$$ 
\end{theorem}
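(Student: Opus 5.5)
The plan is to factor the induction functor through the even subgroup and to filter the intermediate module by exterior powers. First, by transitivity of induction together with the fact that restriction from $G$ to $G_{\0}$ corresponds to induction along $G_{\0}\le G$, we have, as $G_{\0}$-modules,
$$[\operatorname{ind}_{H}^{G}M]|_{G_{\0}}\cong \operatorname{ind}_{H_{\0}}^{G_{\0}}\bigl(\operatorname{ind}_{H}^{G}M\bigr)|_{H_{\0}}.$$
This is cleaner after replacing the right side by a module attached to the pair $(G_{\0},H)$. We use that $\mathbb{C}[G]\cong \mathbb{C}[G_{\0}]\otimes \Lambda^{\bullet}({\mathfrak g}_{\1}^{*})$ as $G_{\0}$-modules, which holds since $G$ is smooth (Masuoka's splitting), and similarly for $H$. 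From this we derive the factorization
$$[\operatorname{ind}_{H}^{G}M]|_{G_{\0}}\cong \operatorname{ind}_{H_{\0}}^{G_{\0}}\bigl(\operatorname{Hom}_{U({\mathfrak h})}(U({\mathfrak g}),M)\bigr)$$
as the "coinduced from ${\mathfrak h}$ to ${\mathfrak g}$" description. Here the inner term is viewed as an $H_{\0}$-module via $H_{\0}$ acting on both factors. We denote the inner functor by $M\mapsto \widetilde M:=\operatorname{Hom}_{U({\mathfrak h})}(U({\mathfrak g}),M)$, regarded up to the $G_{\0}$-part.

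Second, the inner functor is exact, because $U({\mathfrak g})$ is free over $U({\mathfrak h})$ by PBW. It also sends injective $H$-modules to $\operatorname{ind}_{H_{\0}}^{G_{\0}}$-acyclic modules, since injectives for $H$ are summands of $M\otimes \mathbb{C}[H]$ and these map to modules of the form $N\otimes\mathbb{C}[H_{\0}]$. Hence the Grothendieck spectral sequence degenerates and yields
$$[R^{n}\operatorname{ind}_{H}^{G}M]|_{G_{\0}}\cong R^{n}\operatorname{ind}_{H_{\0}}^{G_{\0}}(\widetilde M).$$

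Third, we filter $\widetilde M$. Choose an ${\mathfrak h}_{\0}$-stable complement and use the PBW filtration of $U({\mathfrak g})$ relative to $U({\mathfrak g}_{\0}+{\mathfrak h})$. The resulting $H_{\0}$-stable filtration of $\widetilde M$ has associated graded pieces
$$M\otimes\Lambda^{j}({\mathfrak g}_{\1}/{\mathfrak h}_{\1})^{*},$$
where the ${\mathfrak g}_{\0}$-directions are absorbed by the outer $\operatorname{ind}_{H_{\0}}^{G_{\0}}$. More precisely, $U({\mathfrak g})\cong U({\mathfrak g}_{\0})\otimes\Lambda({\mathfrak g}_{\1})$, and quotienting by ${\mathfrak h}$ leaves $\Lambda({\mathfrak g}_{\1}/{\mathfrak h}_{\1})$ on the odd side. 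The filtration is finite, with $0\le j\le \dim {\mathfrak g}_{\1}/{\mathfrak h}_{\1}$. A finite filtration of an object gives the spectral sequence of a filtered complex. We apply $\operatorname{ind}_{H_{\0}}^{G_{\0}}$ to a filtered injective resolution compatible with the filtration, which is possible for finite filtrations by a Cartan–Eilenberg/horseshoe construction. This produces
$$E_{1}^{i,j}=R^{i+j}\operatorname{ind}_{H_{\0}}^{G_{\0}}[M\otimes\Lambda^{j}({\mathfrak g}_{\1}/{\mathfrak h}_{\1})^{*}]\Rightarrow R^{i+j}\operatorname{ind}_{H_{\0}}^{G_{\0}}\widetilde M.$$
Combining this with the second step gives the statement.

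The main obstacle is the third step, namely proving that the filtration is genuinely $H_{\0}$-stable and identifying its graded pieces $H_{\0}$-equivariantly, rather than only ${\mathfrak h}_{\0}$-equivariantly. To handle this, I would work with coordinate rings via the $G_{\0}$-equivariant splitting $G\cong G_{\0}\times \mathbb{A}^{0|\dim{\mathfrak g}_{\1}}$. I would then filter by powers of the ideal generated by odd coordinates, which is canonically $G_{\0}$-stable. Its graded pieces are canonically $S^{j}$ of the odd conormal space, i.e. $\Lambda^{j}({\mathfrak g}_{\1}/{\mathfrak h}_{\1})^{*}$ after taking $H$-invariants.
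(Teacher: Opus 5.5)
The paper gives no proof of this statement; it is quoted from \cite[Theorem 3.1.1]{GN23}. Judged on its own, your argument has a genuine gap.

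\textbf{Steps 1--2: the factorization is never established.} Everything rests on an isomorphism $[\operatorname{ind}_H^G M]|_{G_{\0}}\cong\operatorname{ind}_{H_{\0}}^{G_{\0}}(\widetilde M)$ for an exact functor $M\mapsto\widetilde M$ into $H_{\0}$-modules. Both displayed formulas in your first step are false as written.
\begin{itemize}
\item For the first, the tensor identity gives $\operatorname{ind}_{H_{\0}}^{G_{\0}}(V|_{H_{\0}})\cong V\otimes\operatorname{ind}_{H_{\0}}^{G_{\0}}\mathbb{C}$, which is not $V$ in general.
\item For the second, take $H=\{1\}$. It would identify $\mathbb{C}[G]\otimes M\cong\mathbb{C}[G_{\0}]\otimes\Lambda^{\bullet}(\mathfrak g_{\1}^{*})\otimes M$ with $\mathbb{C}[G_{\0}]\otimes\operatorname{Hom}_{\mathbb{C}}(U(\mathfrak g),M)$. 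The latter is far too large once $\mathfrak g_{\0}\neq 0$, and ``regarded up to the $G_{\0}$-part'' does not define a replacement.
\end{itemize}
The object you have in mind, filtered ``relative to $U(\mathfrak g_{\0}+\mathfrak h)$'', only makes sense when $[\mathfrak g_{\0},\mathfrak h_{\1}]\subseteq\mathfrak h_{\1}$. If moreover $\mathfrak h_{\1}$ is $G_{\0}$-stable, then $K=G_{\0}H$ is a subgroup with $K/H\cong G_{\0}/H_{\0}$. In that case an argument along your lines can be run through $\operatorname{ind}_H^G=\operatorname{ind}_K^G\circ\operatorname{ind}_H^K$, with $\operatorname{ind}_K^G$ exact. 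In general there is no such $K$, for instance for Borel subgroups of $Q(n)$ or non-distinguished Borels of $GL(m|n)$. You give no construction of $\widetilde M$ or of its $H_{\0}$-action.

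\textbf{The proposed repair also fails.} $H_{\0}$-stability is not the issue: the ideal $J\subseteq\mathbb{C}[G]$ generated by odd functions is already $G_{\0}\times G_{\0}$-stable. The problem is that $J$ is not stable under right translation by the odd part of $H$. The elements of $\mathfrak h_{\1}$ act by odd derivations, which map $J^j$ only into $J^{j-1}$. Already for $G=H$ the odd line one has $\Delta(\theta)=\theta\otimes 1+1\otimes\theta$. So $M\otimes J^j$ is not an $H$-submodule of $M\otimes\mathbb{C}[G]$, and ``taking $H$-invariants'' of $J^j/J^{j+1}$ has no meaning.

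\textbf{The missing idea.} Filter by an ideal that is stable under the group that actually acts, and dispense with $\widetilde M$ altogether.
\begin{itemize}
\item The subspace $\mathfrak g_{\0}+\mathfrak h$ you keep reaching for is the tangent space at $e$ of the closed subsuperscheme $G_{\0}H:=\pi^{-1}(G_{\0}/H_{\0})\subseteq G$. Here $\pi:G\to G/H$, and $G_{\0}/H_{\0}\subseteq G/H$ is cut out by the odd nilpotents.
\item This subscheme is stable under left $G_{\0}$ and right $H$ even when it is not a subgroup. Moreover $G_{\0}H\cong(G_{\0}\times H)/H_{\0}$, with $H_{\0}$ embedded diagonally.
\item Let $I\subseteq\mathbb{C}[G]$ be its ideal. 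Then $\{M\otimes I^j\}_j$ is a finite filtration of $M\otimes\mathbb{C}[G]$ by $G_{\0}\times H$-submodules, with $I^{N+1}=0$ for $N=\dim\mathfrak g_{\1}-\dim\mathfrak h_{\1}$.
\item Since $I^j/I^{j+1}\cong\Lambda^j(I/I^2)$ and the conormal fibre at $e$ is $(\mathfrak g_{\1}/\mathfrak h_{\1})^*$, one gets $M\otimes I^j/I^{j+1}\cong\operatorname{ind}_{H_{\0}}^{H}\bigl(M\otimes\Lambda^j(\mathfrak g_{\1}/\mathfrak h_{\1})^*\otimes\mathbb{C}[G_{\0}]\bigr)$.
\item Now use $R^n\operatorname{ind}_H^G M\cong\operatorname{H}^n(H,M\otimes\mathbb{C}[G])$ and the exactness of $\operatorname{ind}_{H_{\0}}^{H}$. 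Shapiro's lemma gives $\operatorname{H}^n(H,M\otimes I^j/I^{j+1})\cong\operatorname{H}^n(H_{\0},M\otimes\Lambda^j(\mathfrak g_{\1}/\mathfrak h_{\1})^*\otimes\mathbb{C}[G_{\0}])\cong R^n\operatorname{ind}_{H_{\0}}^{G_{\0}}[M\otimes\Lambda^j(\mathfrak g_{\1}/\mathfrak h_{\1})^*]$, all $G_{\0}$-equivariantly.
\item The spectral sequence of this finite filtration is exactly the one stated.
\end{itemize}
Equivalently, filter the sheaf $\mathcal{L}(M)$ on $G/H$, whose underlying space is that of $G_{\0}/H_{\0}$, by the subsheaves $\mathcal{J}^j\mathcal{L}(M)$, where $\mathcal{J}$ is the nilpotent ideal of $\mathcal{O}_{G/H}$.
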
 

We note that the odd elements are built into the spectral sequence via tensoring $M$ by $\Lambda^{j}({\mathfrak g}_{\1}/{\mathfrak h}_{\1})^{*}$. The spectral sequence has been used to 
reprove many of the important vanishing results in \cite{GGNW21}. Furthermore, versions of Kempf's vanishing theorem and the Bott-Borel-Weil theorem have been established for generic (large) weights. 
See \cite[Theorem 5.2.2, Theorem 5.3.1]{GN23}. 

\section{Bott-Borel-Weil and Lie superalgebra cohomology}\label{S:BBW-Kostant}

\subsection{} In this section, let $G$ be a quasi-reductive algebraic supergroup scheme. Moreover, let $B\cong T\ltimes U$ be a closed subgroup scheme of $G$ where $T\subseteq G_{\0}$ is a split 
toral subgroup scheme and $U$ is a unipotent group scheme. 

Let $\lambda\in X(T)$ be a one-dimensional representation for $T$ viewed as a $B$-module by inflation. For $n\geq 0$, consider $R^{n}\text{ind}_{B}^{G}\lambda$ which is a rational $G$-module and 
the $n$-th higher right derived functor of $\text{ind}_{B}^{G} \lambda$. 

Over the complex numbers the category $\text{Mod}(G)$ has enough projective modules. Let $\Lambda_{+}$ index the simple modules in $\text{Mod}(G)$. For each $\sigma \in \Lambda_{+}$, let 
$L(\sigma)$ be the simple module, and $P(\sigma)$ be its projective cover. The following theorem relates the composition factor multiplicities of 
$R^{n}\text{ind}_{B}^{G}\lambda$ with the Lie superalgebra cohomology for ${\mathfrak u}=\text{Lie }U$. 

\begin{theorem}\label{T:sheafucoho} Let $G$ be a quasi-reductive algebraic supergroup scheme with a closed subgroup scheme $B=T\times U$ where $T\subseteq G_{\0}$ is a split 
toral subgroup scheme and $U$ is a unipotent group scheme. Set ${\mathfrak u}=\operatorname{Lie }U$. Then for $\lambda\in X(T)$ and $\sigma\in \Lambda_{+}$, 
$$[R^{n}\operatorname{ind}_{B}^{G} \lambda: L(\sigma)]=\dim \operatorname{H}^{n}({\mathfrak u},P(\sigma)^{*})_{-\lambda}.$$ 
\end{theorem}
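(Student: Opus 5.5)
Since $P(\sigma)$ is the projective cover of $L(\sigma)$ in $\text{Mod}(G)$, and over ${\mathbb C}$ every endomorphism ring of a simple module is ${\mathbb C}$ (possibly up to a parity twist, which we suppress), composition multiplicities are read off by Hom out of the projective cover:
$$[R^{n}\operatorname{ind}_{B}^{G}\lambda : L(\sigma)]=\dim \operatorname{Hom}_{G}(P(\sigma), R^{n}\operatorname{ind}_{B}^{G}\lambda).$$
This uses that $R^n\operatorname{ind}_B^G\lambda$ is a rational $G$-module and exactness of $\operatorname{Hom}_G(P(\sigma),-)$. If the module is infinite dimensional, one first reduces to a finite-length submodule by local finiteness; both sides are compatible with the resulting direct limit.

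Next we move the derived functor from the second argument to the first. Since $P(\sigma)$ is projective, $\operatorname{Hom}_G(P(\sigma),-)$ is exact, so the Grothendieck spectral sequence for $\operatorname{Hom}_G(P(\sigma),-)\circ \operatorname{ind}_B^G$ collapses. Combined with Frobenius reciprocity $\operatorname{Hom}_G(P(\sigma),\operatorname{ind}_B^G -)\cong \operatorname{Hom}_B(P(\sigma),-)$, this gives
$$\operatorname{Hom}_{G}(P(\sigma),R^{n}\operatorname{ind}_{B}^{G}\lambda)\cong \operatorname{Ext}^{n}_{B}(P(\sigma),\lambda)\cong \operatorname{H}^{n}(B,P(\sigma)^{*}\otimes\lambda).$$
The last isomorphism uses that $P(\sigma)$ is finite dimensional, which holds in this setting because projective covers in $\mathcal C_{(\mathfrak g,\mathfrak g_{\0})}$ are summands of $U(\mathfrak g)\otimes_{U(\mathfrak g_{\0})}L_{\0}$. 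The one point needing care is that $\operatorname{ind}_B^G$ sends injective $B$-modules to injective $G$-modules, so that the composite is computed by injective resolutions. This is standard, since induction is right adjoint to the exact restriction functor.

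Finally we compute $B$-cohomology via the Lyndon--Hochschild--Serre spectral sequence for the normal subgroup $U\trianglelefteq B$ with quotient $T$:
$$E_2^{i,j}=\operatorname{H}^{i}(T,\operatorname{H}^{j}(U,P(\sigma)^{*}\otimes\lambda))\Rightarrow \operatorname{H}^{i+j}(B,P(\sigma)^*\otimes\lambda).$$
Because $T$ is a split torus over ${\mathbb C}$, $\operatorname{Mod}(T)$ is semisimple. Hence the sequence collapses to $\operatorname{H}^n(U,P(\sigma)^*\otimes\lambda)^T=\operatorname{H}^n(U,P(\sigma)^*)_{-\lambda}$, using that $U$ acts trivially on $\lambda$.

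It remains to identify $\operatorname{H}^n(U,-)$ with the Lie superalgebra cohomology $\operatorname{H}^n(\mathfrak u,-)$ on finite-dimensional modules, compatibly with the $T$-weight decomposition. I expect this to be the main obstacle. For a unipotent supergroup over ${\mathbb C}$, $\operatorname{Dist}(U)=U(\mathfrak u)$, and rational $U$-modules are the locally nilpotent (integrable) $\mathfrak u$-modules. I would first try to show that the standard Koszul (Chevalley--Eilenberg) complex $\operatorname{Hom}(\Lambda_s^\bullet(\mathfrak u),M)$ computes rational $U$-cohomology. The route is to prove $\operatorname{H}^{>0}(U,{\mathbb C}[U])=0$, where ${\mathbb C}[U]\cong {\mathbb C}[U_{\0}]\otimes\Lambda(\mathfrak u_{\1}^*)$, and to compare with $\operatorname{H}^{>0}(\mathfrak u,{\mathbb C}[U])=0$ via a super Poincaré lemma. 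Both functors are then universal $\delta$-functors agreeing in degree zero on the locally finite category. Alternatively, one could filter $U$ by a central series, reduce to the even case (van Est/Hochschild--Mostow for unipotent groups) and to purely odd abelian pieces, and compare via the Hochschild--Serre spectral sequences for $U$ and $\mathfrak u$ simultaneously. In either approach the comparison map is $T$-equivariant, so taking the $-\lambda$ weight space yields the stated equality.
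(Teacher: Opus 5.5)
Your proposal is correct and follows the paper's proof step for step: Hom out of the projective cover $P(\sigma)$, collapse of the spectral sequence $\operatorname{Ext}^i_G(P(\sigma),R^j\operatorname{ind}_B^G\lambda)\Rightarrow\operatorname{Ext}^{i+j}_B(P(\sigma),\lambda)$ because $P(\sigma)$ is projective, collapse of the Lyndon--Hochschild--Serre spectral sequence for $T\ltimes U$ by semisimplicity of $T$, and finally the identification $\operatorname{H}^n(U,-)\cong\operatorname{H}^n(\mathfrak u,-)$. The only difference is that the paper simply cites Masuoka and Oka (their Theorem 3.2 on unipotent supergroups in characteristic zero) for that last comparison, while you sketch how to prove it yourself via effaceability and $\operatorname{H}^{>0}(\mathfrak u,\mathbb{C}[U])=0$ from a super Poincar\'e lemma.
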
 

\begin{proof} First observe that 
$$[R^{n}\operatorname{ind}_{B}^{G} \lambda: L(\sigma)]=\dim \operatorname{Hom}_{G}(P(\sigma),R^{n}\text{ind}_{B}^{G} \lambda).$$ 
Now there exists a spectral sequence (cf. \cite[Theorem 3.4.1]{GN23}), 
$$
E_{2}^{i,j}=\text{Ext}^{i}_{G}(P(\sigma),R^{j}\text{ind}_{B}^{G}\lambda)\Rightarrow \text{Ext}^{i+j}_{B}(P(\sigma),\lambda). 
$$ 
Since $P(\sigma)$ is projective, this spectral sequence collapses and yields the isomorphism 
$$
\operatorname{Hom}_{G}(P(\sigma),R^{n}\text{ind}_{B}^{G} \lambda)\cong \text{Ext}^{n}_{B}(P(\sigma),\lambda). 
$$ 
for $n\geq 0$. 

Next consider the Lyndon-Hochschild-Serre spectral sequence for $B\cong T\ltimes U$. 
$$
E_{2}^{i,j}=\text{Ext}^{i}_{T}({\mathbb C},\text{Ext}^{j}_{U}(P(\sigma),\lambda))\Rightarrow \text{Ext}^{i+j}_{B}(P(\sigma),\lambda). 
$$
Modules over $T$ are completely reducible so this spectral sequence collapes and one has 
$$
 \text{Ext}^{n}_{B}(P(\sigma),\lambda)\cong  \text{Ext}^{n}_{U}(P(\sigma),\lambda)^{T}\cong  \text{Ext}^{n}_{U}(P(\sigma),{\mathbb C})_{-\lambda}
 \cong \operatorname{H}^{n}(U,P(\sigma)^{*})_{-\lambda}. 
 $$  

Finally, one can apply the result of Masuoka and Oka \cite[Theorem 3.2]{MO05} for unipotent supergroups (in characteristic zero)  to make the identification: 
$$ \operatorname{H}^{n}(U,P(\sigma)^{*})_{-\lambda} \cong \operatorname{H}^{n}({\mathfrak u},P(\sigma)^{*})_{-\lambda}. 
$$
\end{proof}

\subsection{} The result in the previous section (cf. Theorem~\ref{T:sheafucoho}) is a generalization of a well-known fact about the famous Bott-Borel-Weil theorem and 
Kostant's theorem on Lie algebra cohomology over ${\mathbb C}$. 

Suppose that we are in setting for reductive algebraic groups and semisimple Lie algebras. In this case  $G=G_{\0}$ and $B=B_{\0}$ the negative Borel. Moreover, let $T=T_{\0}$ 
a maximal split torus in $G_{\0}$ with $U=U^{-}$ the unipotent radical of $B_{\0}$. Let $U^{+}$ be the unipotent radical for the opposite Borel, with ${\mathfrak u}^{\pm}=\text{Lie }U^{\pm}$. 

\begin{cor} Let $G_{\0}$ be a reductive algebraic group over ${\mathbb C}$. The Bott-Borel-Weil Theorem for $R^{n}\operatorname{ind}_{B_{\0}}^{G_{\0}} \lambda$ is equivalent to 
Kostant's Theorem for the Lie algebra cohomology of ${\mathfrak u}^{+}$. 
\end{cor}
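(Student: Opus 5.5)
Specialize Theorem~\ref{T:sheafucoho} to the classical case $G=G_{\0}$, $B=B_{\0}=T\ltimes U^{-}$, ${\mathfrak u}={\mathfrak u}^{-}$. The category $\text{Mod}(G_{\0})$ is semisimple, so $P(\sigma)=L(\sigma)$ for every dominant $\sigma$ and $L(\sigma)^{*}\cong L(-w_{0}\sigma)$. The theorem then reads
$$[R^{n}\operatorname{ind}_{B_{\0}}^{G_{\0}}\lambda : L(\sigma)]=\dim \operatorname{H}^{n}({\mathfrak u}^{-},L(\sigma)^{*})_{-\lambda}.$$
By semisimplicity the left side determines $R^{n}\operatorname{ind}_{B_{\0}}^{G_{\0}}\lambda$ up to isomorphism. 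The right side is a collection of $T$-weight-space dimensions of ${\mathfrak u}^{-}$-cohomology of simple modules, which is exactly the data Kostant's theorem computes. So the two statements determine each other once we pass between ${\mathfrak u}^{-}$ and ${\mathfrak u}^{+}$.

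To convert to ${\mathfrak u}^{+}$, use the Chevalley involution, or equivalently conjugation by a representative $\dot w_{0}$ of the longest Weyl group element. Conjugation by $\dot w_{0}$ carries ${\mathfrak u}^{-}$ to ${\mathfrak u}^{+}$ and twists $T$-weights by $w_{0}$. It also identifies $L(\sigma)^{*}$, twisted by $\dot w_{0}$, with $L(\sigma)^{*}$ itself, since the twist of a $G_{\0}$-module by an inner automorphism is isomorphic to the module. This gives
$$\operatorname{H}^{n}({\mathfrak u}^{-},L(\sigma)^{*})_{-\lambda}\cong \operatorname{H}^{n}({\mathfrak u}^{+},L(\sigma)^{*})_{-w_{0}\lambda}.$$
Alternatively, one can apply the Chevalley involution composed with duality to land in $\operatorname{H}^{n}({\mathfrak u}^{+},L(\sigma))$ with a suitably signed weight. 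I would pick whichever convention makes Kostant's formula appear in its standard form.

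The two directions then run as follows.

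\begin{itemize}
\item \textbf{Kostant $\Rightarrow$ BBW.} Kostant's theorem says that $\operatorname{H}^{n}({\mathfrak u}^{+},L(\mu))$ is the multiplicity-free sum of the $T$-weights $w\cdot\mu$ with $\ell(w)=n$. Substituting into the displayed equality shows that $L(\sigma)$ occurs in $R^{n}\operatorname{ind}\lambda$ exactly once when $\lambda$ is $w$-dot-conjugate to $\sigma$ with $\ell(w)=n$, and not at all otherwise. This uses the regularity of $\lambda+\rho$ and the freeness of the dot action on regular weights. That conclusion is precisely BBW in the form of \cite[II 5.5 Corollary]{Jan03}.
\item \textbf{BBW $\Rightarrow$ Kostant.} Run the same identities backwards. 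BBW gives all the composition multiplicities on the left side, hence every weight space dimension of $\operatorname{H}^{n}({\mathfrak u}^{+},L(\mu))$ for every $\mu$. Since $T$ acts semisimply on this cohomology, knowing all weight multiplicities determines it as a $T$-module, which is Kostant's statement.
\end{itemize}

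The main obstacle is the bookkeeping of conventions. The theorem pairs a negative Borel with $P(\sigma)^{*}$ and the weight $-\lambda$, while Kostant is usually stated for ${\mathfrak u}^{+}$ with the module $L(\mu)$. I would fix the identification $L(\sigma)^{*}\cong L(-w_{0}\sigma)$ together with the $w_{0}$-twist first, and check explicitly that $-w_{0}(w\cdot\sigma')$ matches $w'\cdot\sigma$ with $\ell(w')=\ell(w)$. This holds because $w\mapsto w_{0}ww_{0}$ preserves length and $w_{0}\rho=-\rho$.
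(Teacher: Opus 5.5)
Your proposal is correct and follows essentially the same route as the paper: specialize Theorem~\ref{T:sheafucoho} with $P(\sigma)=L(\sigma)$, use $L(\sigma)^{*}\cong L(-w_{0}\sigma)$, pass from ${\mathfrak u}={\mathfrak u}^{-}$ to ${\mathfrak u}^{+}$, and compare the resulting multiplicity formula with BBW and Kostant. The only difference is in the transfer step. The paper justifies it by comparing the versions of Kostant's theorem for ${\mathfrak u}$ and ${\mathfrak u}^{+}$, which presupposes Kostant, and lands directly in $\operatorname{H}^{n}({\mathfrak u}^{+},L(\sigma))_{\lambda}$; your twist by $\dot w_{0}$ is independent of Kostant and gives the equivalent form $\operatorname{H}^{n}({\mathfrak u}^{+},L(-w_{0}\sigma))_{-w_{0}\lambda}$ (and the Chevalley involution alone, with no extra duality, yields the paper's form).
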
 

\begin{proof} For $\lambda\in X(T)$ and $\sigma\in X(T)_{+}$ (dominant weights), one has by Theorem~\ref{T:sheafucoho}, 
\begin{eqnarray*} 
[R^{n}\operatorname{ind}_{B_{\0}}^{G_{\0}} \lambda: L(\sigma)]&=& \dim \operatorname{H}^{n}({\mathfrak u},L(\sigma)^{*})_{-\lambda} \\
&=&\dim \operatorname{H}^{n}({\mathfrak u},L(-w_{0}\sigma))_{-\lambda}\\
&=&\dim \operatorname{H}^{n}({\mathfrak u}^{+},L(\sigma))_{\lambda}. 
\end{eqnarray*}

The second to third equality follows from comparing Kostant's theorem for ${\mathfrak u}$ to the version for ${\mathfrak u}^{+}$. Finally, one can 
verify the statement of the theorem by using the equality above and the statements of the Bott-Borel-Weil theorem \cite[II 5.5 Corollary]{Jan03} and Kostant's theorem for ${\mathfrak u}^{+}$ 
\cite[Theorem 4.2.1]{UGA09}
\end{proof} 

One may also generalize the aforementioned argument to include Kostant's theorem for the unipotent radical ${\mathfrak u}_{J}$ of a parabolic ${\mathfrak p}_{J}$ with more general version of 
Bott-Borel-Weil. 

\section{Analogies with Finite Groups} \label{S:grouprep}

\subsection{}  Boe, Kujawa and Nakano introduced a new idea for studying representations for classical Lie superalgebras via the 
use of geometric invariant theory (cf. \cite{BKN09,BKN10a, BKN10b}). 
In the case when the action of $G_{\bar{0}}$ on ${\mathfrak g}_{\1}$ is polar and stable, they constructed a two families of detecting subalgebras 
$${\mathfrak e} \leq {\mathfrak f} \leq {\mathfrak g}$$ 
with ${\mathfrak e}={\mathfrak e}_{\bar{0}}\oplus {\mathfrak e}_{\bar{1}}$ and ${\mathfrak f}={\mathfrak f}_{\bar{0}}\oplus {\mathfrak f}_{\bar{1}}$. 
In many cases, ${\mathfrak e}$ is a direct sum of $\mathfrak{q}(1)$s and ${\mathfrak f}$ is a direct sum of $\mathfrak{sl}(1|1)$s.  At this point, it should be mentioned that 
the cohomological results and  the support variety results could be stated via the rational categories for quasi-reductive algebraic supergroups $G$ (resp. $F$ and $E$) where ${\mathfrak g}=\text{Lie }G$ 
(resp. ${\mathfrak f}=\text{Lie }F$ and ${\mathfrak e}=\text{Lie }E$.

The detecting subalgebras have the feature that they are Lie theoretic in nature and they possess the property that they detect the cohomology 
like elementary abelian subgroups and Sylow $p$-subgroups for finite groups. The following diagram illustrates this point. 
\[
\begin{tikzcd}
S^{\bullet}({\mathfrak g}^{*}_{\1})^{G_{\0}}\arrow[r, "\text{res}" ] \arrow[d] & S^{\bullet}({\mathfrak f}^{*}_{\1})^{N} \arrow[r, "\text{res}"] \arrow[d] & S^{\bullet}({\mathfrak e}^{*}_{\1})^{W} \arrow[d] \\
\operatorname{H}^{\bullet}({\mathfrak g},{\mathfrak g}_{\bar{0}},{\mathbb C}) \arrow[r, "\text{res}"] & \operatorname{H}^{\bullet}({\mathfrak f},{\mathfrak f}_{\bar{0}},{\mathbb C})^{N}  \arrow[r,"\text{res}"] & 
\operatorname{H}^{\bullet}({\mathfrak e},{\mathfrak e}_{\bar{0}},{\mathbb C})^{W}. 
\end{tikzcd}
\]
Here $N$ is the normalizer of ${\mathfrak f}_{\1}$ in $G_{\0}$ and $W$ is a finite reflection group. The vertical arrows are isomorphisms which means that the cohomology can realized as rings of invariants. 
These horozontal restiction maps which turn out to be isomorphisms can be viewed as a Lie superalgebra version of the  well-known ``Chevalley restriction theorem" proved for semisimple Lie algebras. 
The picture demonstrates that the ``restriction theorem" for Lie superalgebras arises functorially, and how the odd elements play a essential role in the theory.  

\subsection{} One can define support variety theories in ${\mathcal F}_{({\mathfrak g},{\mathfrak g}_{0})}$ and the corresponding categories 
for the detecting subalgebras. The aforementioned isomorphisms can be used to verify that for any $M\in 
{\mathcal F}_{({\mathfrak g},{\mathfrak g}_{0})}$ there are injective maps of support varieties: 
\begin{equation}\label{E:isomor}
{\mathcal V}_{({\mathfrak e},{\mathfrak e}_{\bar 0})}(M)/W \hookrightarrow {\mathcal V}_{({\mathfrak f},{\mathfrak f}_{\bar 0})}(M)/N \hookrightarrow
{\mathcal V}_{({\mathfrak g},{\mathfrak g}_{\bar 0})}(M). 
\end{equation} 
At the time, when this work was done in \cite{BKN10a}, it was conjectured that the inclusions should be isomorphisms. The conjecture was proved by D. Grantcharov, N. Grantcharov, Nakano and Wu 
\cite[Theorem 1.2.1]{GGNW21}. 

\begin{theorem}\label{T:isosupports}  Let $\g$ be a simple classical Lie superalgebra and let $M$ be in ${\mathcal F_{(\g,\g_{\0})}}$.
\begin{itemize}
\item[(a)] If $\g$ is stable then the map on support varieties
$$\operatorname{res}^{*}:{\mathcal V}_{(\f,\f_{\0})}(M)/N
\rightarrow {\mathcal V}_{(\g,\g_{\0})}(M)$$ is an isomorphism.
\item[(b)] If $\g$ is stable and polar then the maps on support varieties
$$\operatorname{res}^{*}:
{\mathcal V}_{(\e,\e_{\0})}(M)/W_{\e}\rightarrow {\mathcal V}_{(\f,\f_{\0})}(M)/N
\rightarrow {\mathcal V}_{(\g,\g_{\0})}(M)$$
are isomorphisms, where $W_{\e}$ is a pseudoreflection group.
\end{itemize}
\end{theorem}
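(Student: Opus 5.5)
We already know from \eqref{E:isomor} that the restriction maps are injective. So the plan is to prove surjectivity onto ${\mathcal V}_{(\g,\g_{\0})}(M)$, and to do it geometrically, via the rank variety description of support varieties.

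\textbf{Step 1: rank varieties.} For $M\in{\mathcal F}_{(\g,\g_{\0})}$ we use the realization
$${\mathcal V}_{(\g,\g_{\0})}(M)\cong\{x\in {\mathfrak g}_{\1}:\ [x,x]=0,\ M|_{\langle x\rangle}\ \text{not projective}\}\cup\{0\},$$
intersected with the cone coming from $S^{\bullet}({\mathfrak g}_{\1}^{*})^{G_{\0}}$. This amounts to identifying the $\g$-support with $G_{\0}$-saturations of rank varieties. We use the analogous descriptions for $\f$ and $\e$. The restriction maps $\operatorname{res}^{*}$ then become the maps induced by the inclusions $\f_{\1}\hookrightarrow\g_{\1}$ and $\e_{\1}\hookrightarrow\f_{\1}$, composed with the quotients by $N$ and $W_{\e}$. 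The injectivity from \eqref{E:isomor} is taken as given.

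\textbf{Step 2: surjectivity in (a).} Take $x\in{\mathcal V}_{(\g,\g_{\0})}(M)$. By stability of the $G_{\0}$-action on ${\mathfrak g}_{\1}$ together with the isomorphism $S^{\bullet}({\mathfrak g}^{*}_{\1})^{G_{\0}}\cong S^{\bullet}({\mathfrak f}^{*}_{\1})^{N}$, every closed $G_{\0}$-orbit in the relevant cone meets $\f_{\1}$. We need $x$ itself, not just its orbit closure, to be conjugate into $\f_{\1}$ while preserving non-projectivity of $M|_{\langle x\rangle}$. For this we use the Jordan-type decomposition of a self-commuting odd element: write $x=x_s+x_n$. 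The key claim is that $M|_{\langle x\rangle}$ is non-projective if and only if $M|_{\langle x_s\rangle}$ is. Granting this, conjugate $x_s$ into $\f_{\1}$ by stability. Since the support variety is conical and $G_{\0}$-stable, the $N$-orbit of $x_s$ maps onto the point of ${\mathcal V}_{(\g,\g_{\0})}(M)$ determined by $x$. This gives surjectivity.

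\textbf{Step 3: surjectivity in (b).} When $\g$ is also polar, the same argument runs one level down. Polarity gives $\e_{\1}$ as a Cartan-type subspace meeting every closed $N$-orbit in $\f_{\1}$, with $S^{\bullet}(\f_{\1}^{*})^{N}\cong S^{\bullet}(\e_{\1}^{*})^{W_{\e}}$. Here $W_{\e}$ is the resulting pseudoreflection group, obtained as normalizer modulo centralizer; it is pseudoreflection by the Chevalley--Shephard--Todd type argument of Dadok--Kac. Composing the two surjections gives the isomorphisms in (b).

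\textbf{Main obstacle.} The hard part is the key claim of Step 2: showing that non-projectivity depends only on the ``semisimple part''. For simple classical $\g$, the nilpotent self-commuting odd elements are not detected on $\f$. I would try to handle this by a case-by-case reduction over Kac's list, using the known descriptions of $\f$ for each type, and by showing that for $M\in{\mathcal F}_{(\g,\g_{\0})}$ the rank variety is closed and conical with every irreducible component meeting the closed-orbit locus. The conical-plus-closed property, combined with the Hilbert--Mumford criterion, would then force points with non-closed orbits to lie in the closure of points with closed orbits, which are detected.
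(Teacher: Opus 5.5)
The survey does not prove this theorem; it cites \cite[Theorem 1.2.1]{GGNW21}. Judged on its own, your plan has a genuine gap, and it starts in Step 1. No rank-variety description of $\mathcal V_{(\g,\g_{\0})}(M)$ is available. That variety is defined cohomologically: it sits inside $\operatorname{Spec}\operatorname{H}^{\bullet}(\g,\g_{\0};\mathbb C)\cong\g_{\1}/\!/G_{\0}$ and is cut out by the annihilator of $\Ext^{\bullet}_{(\g,\g_{\0})}(M,M)$. The survey records a rank-variety realization only for $\e$.

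The specific set you write down, with $[x,x]=0$, is the Duflo--Serganova associated variety, and it cannot be the support variety. For $\mathfrak{gl}(m|n)$, write $x=(A,B)$. Then $[x,x]=0$ iff $AB=0=BA$, while the invariants are generated by the $\operatorname{tr}((AB)^k)$. So the whole self-commuting cone lies in the nullcone and maps to the origin of $\g_{\1}/\!/G_{\0}$. Concretely, for $\mathfrak{sl}(2|1)$ and $M=\mathbb C$, the support variety is the line $\operatorname{Spec}\mathbb C[BA]$ (atypicality $1$), but your set maps to a point. The rank variety for $\e$ uses arbitrary odd $x$, with relative projectivity over $\langle x\rangle=\mathbb C x+\mathbb C[x,x]$.

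If you repair Step 1 to say that $\mathcal V_{(\g,\g_{\0})}(M)$ is the image of $\{x\in\g_{\1}: M|_{\langle x\rangle}\text{ not relatively projective}\}$, the argument becomes circular. Combined with a rank variety for $\f$, that assertion is essentially the theorem you are trying to prove.

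Step 2 then attacks a problem that is not there. Points of $\mathcal V_{(\g,\g_{\0})}(M)\subseteq\g_{\1}/\!/G_{\0}$ correspond to closed $G_{\0}$-orbits. The isomorphism $S^{\bullet}(\g_{\1}^{*})^{G_{\0}}\cong S^{\bullet}(\f_{\1}^{*})^{N}$ already identifies $\f_{\1}/\!/N$ with $\g_{\1}/\!/G_{\0}$. So Jordan decompositions, Hilbert--Mumford arguments and case-by-case conjugation add nothing, and they cannot yield surjectivity. Your key claim is also false: for nilpotent $x$ one has $x_s=0$, yet $M|_{\langle x\rangle}$ is non-projective for $M=\mathbb C$ and projective for $M$ projective in ${\mathcal F}_{(\g,\g_{\0})}$.

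What surjectivity actually requires is a detection statement at the level of Ext. If $\zeta\in\operatorname{H}^{\bullet}(\g,\g_{\0};\mathbb C)$ restricts to an element acting nilpotently on $\Ext^{\bullet}_{(\f,\f_{\0})}(M,M)$, then $\zeta$ must act nilpotently on $\Ext^{\bullet}_{(\g,\g_{\0})}(M,M)$. Restriction $\Ext^{\bullet}_{(\g,\g_{\0})}(M,M)\to\Ext^{\bullet}_{(\f,\f_{\0})}(M,M)$ is far from injective, so this needs an input that controls the kernel. That is the role Quillen--Venkov and Avrunin--Scott play for finite groups, and your proposal has no such input. The cited proof obtains it homologically, through BBW parabolics, not from a rank variety for $\g$. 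Step 3 composes with Step 2 and inherits the same gap.
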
 

This is reminiscent of the situation with modular representation theory of finite groups. The Quillen stratification theorem \cite[II Proposition 5.7.4]{Ben91} states that one can describe the cohomological support varieties for over a finite group via the support varieties over its elementary abelian subgroups. The support varieties for the elementary abelian subgroups have an explicit ``rank variety" realization. In Theorem~\ref{T:isosupports}, one can identify the support variety 
${\mathcal V}_{(\g,\g_{\0})}(M)$ which can be described as being isomorphic to ${\mathcal V}_{(\e,\e_{\0})}(M)/W_{\e}$. In the superalgebra situation, one has also has a realization of the support variety ${\mathcal V}_{(\e,\e_{\0})}(M)$ as 
a rank variety: 
$${\mathcal V}_{(\e,\e_{\0})}(M)\cong {\mathcal V}^{\operatorname{rank}}_{(\e,\e_{\0})}(M):=\{x\in \e_{\1}:\ M|_{U(\langle x \rangle)}\ \text{is not projective}\}\cup \{0\}.$$ 

\subsection{} Given a basic classical Lie superalgebra ${\mathfrak g}$, the {\em defect} ($\text{def}({\mathfrak g})$), and {\em atypicality} ($\text{atyp}(\lambda)$), are a combinatorial invariants for the algebra and 
irreducible representations that were first defined by Kac and Wakimoto (see \cite{KW94}). The following theorem \cite{BKN09}, \cite{Kuj12} shows that one can give a geometric interpretation for the defect\footnote{The second author asked V. Kac one time whether his notion of defect had anything to do with modular representation theory of finite groups. He said that he had not thought about any connection.}  of ${\mathfrak g}$ and the atypicality of a weight $\lambda$. 

\begin{thm} Let ${\mathfrak g}$ be a basic classical Lie superalgebra and $L(\lambda)$ be a simple module in ${\mathcal F}_{({\mathfrak g},{\mathfrak g}_{\0})}$. Then 
\begin{itemize} 
\item[(a)] $\operatorname{def}({\mathfrak g})=\dim {\mathfrak e}_{\1}$ 
\item[(b)] $\dim  {\mathcal V}_{({\mathfrak g},{\mathfrak g}_{\0})}(L(\lambda))=\operatorname{atyp}(\lambda)$.
\end{itemize} 
\end{thm}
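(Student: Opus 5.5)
Part (a) is a structural comparison and I would handle it case by case. For each basic classical $\g$ in the list ($\mathfrak{gl}(m|n)$, $\mathfrak{sl}(m|n)$, $\mathfrak{osp}(m,n)$, $D(2,1,\alpha)$, $F(4)$, $G(3)$), the action of $G_{\0}$ on $\g_{\1}$ is stable and polar. The BKN construction then gives $\e_{\1}$ as the fixed points of a generic stabilizer, i.e.\ a Cartan subspace for the polar action. I would write $\e_{\1}$ explicitly as the span of a maximal family of mutually orthogonal isotropic odd roots paired with their negatives, coming from root vectors $x_{\pm\alpha_i}$. Then $\dim \e_{\1}$ counts such roots. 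By Kac--Wakimoto, $\operatorname{def}(\g)$ is exactly the maximal number of mutually orthogonal linearly independent isotropic roots, so (a) follows by matching the two counts.

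Part (b) is the substantive statement. I would reduce from $\g$ to $\e$ using Theorem~\ref{T:isosupports}(b), which gives $\mathcal{V}_{(\g,\g_{\0})}(L(\lambda)) \cong \mathcal{V}_{(\e,\e_{\0})}(L(\lambda))/W_{\e}$. Since $W_{\e}$ is finite, taking the quotient does not change dimension. It therefore suffices to show $\dim \mathcal{V}^{\operatorname{rank}}_{(\e,\e_{\0})}(L(\lambda)) = \operatorname{atyp}(\lambda)$.

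The first step is an upper bound. For $x \in \e_{\1}$ with $[x,x]=0$, the restriction $L(\lambda)|_{U(\langle x\rangle)}$ is projective exactly when the Duflo--Serganova cohomology $L(\lambda)_x = \ker x/\operatorname{im} x$ vanishes. If $x = \sum c_i x_{\alpha_i}$ with support on fewer than $\operatorname{atyp}(\lambda)+1$ suitable roots, one must check whether $L(\lambda)_x \neq 0$. I would use the fact that the associated variety $X_{L(\lambda)}$ lies in the locus of rank $\le \operatorname{atyp}(\lambda)$ in the self-commuting cone. This uses that $\lambda$ is atypical with respect to at most $\operatorname{atyp}(\lambda)$ orthogonal isotropic roots, which in turn comes from central character considerations. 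Intersecting with $\e_{\1}$ then bounds the rank variety by a union of coordinate-type subspaces of dimension $\operatorname{atyp}(\lambda)$.

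The second step is the lower bound. After conjugating by an odd reflection or choosing a suitable Borel, I would arrange that $\lambda$ is atypical for $\alpha_1,\dots,\alpha_k$ with $k = \operatorname{atyp}(\lambda)$. I would then show that for generic $x$ in the span of $x_{\pm\alpha_1},\dots,x_{\pm\alpha_k}$ (the appropriate $\e_{\1}$-part), the module $L(\lambda)_x$ is nonzero. The tool here is Serganova's result that $\operatorname{DS}_x L(\lambda)$ is nonzero, with atypicality dropping by $\operatorname{rank} x$. Alternatively, one can use a highest-weight-vector argument: $x_{\alpha_i}$ kills the highest weight vector, and since $(\lambda+\rho,\alpha_i)=0$ that vector is not in the image of $x$.

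I expect this lower bound to be the main obstacle. The difficulty is showing non-projectivity on a full $k$-dimensional family rather than only along coordinate axes. I would first try to reduce to the $\mathfrak{gl}(m|n)$ case via Kujawa's arguments and the functoriality of Duflo--Serganova functors, and then handle the exceptional algebras by direct inspection.
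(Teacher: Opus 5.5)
\textbf{There is a genuine gap in part (b).} Your reduction from $\mathcal V_{(\g,\g_{\0})}$ to $\mathcal V_{(\e,\e_{\0})}$ via Theorem~\ref{T:isosupports}(b) is the right way to get the $\g$-version, apart from the minor point that the theorem is stated for simple $\g$. The problem is what comes next. Both of your bounds run through Duflo--Serganova cohomology of elements $x\in\e_{\1}$ with $[x,x]=0$, and there are no such nonzero elements.

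In the BKN construction, $\e_{\1}$ is the Cartan subspace of the polar action. Up to normalization it is spanned by $x_{\alpha_i}+x_{-\alpha_i}$, where $\alpha_1,\dots,\alpha_r$ are mutually orthogonal, linearly independent isotropic roots; for $\mathfrak{gl}(m|n)$ these are matrices $E_{ab}+E_{ba}$. For $i\neq j$, $\alpha_i\pm\alpha_j$ is not a root, and $[x_{\alpha_i},x_{\alpha_i}]=0$. Normalizing $[x_{\alpha_i},x_{-\alpha_i}]=h_{\alpha_i}$ therefore gives
\[
x=\sum_i a_i\,(x_{\alpha_i}+x_{-\alpha_i})\quad\Longrightarrow\quad [x,x]=2\sum_i a_i^2\,h_{\alpha_i},
\]
which is nonzero whenever $x\neq 0$. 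So each $\langle x\rangle=\mathbb{C}x\oplus\mathbb{C}[x,x]$ is a copy of $\mathfrak{q}(1)$, not an exterior algebra. Moreover, $\e_{\1}$ meets the self-commuting cone $X=\{y\in\g_{\1}:[y,y]=0\}$ only in $0$. This has three consequences:
\begin{enumerate}
\item[(i)] ``Intersecting $X_{L(\lambda)}$ with $\e_{\1}$'' gives $\{0\}$, so your upper bound says nothing about the rank variety.
\item[(ii)] The lower bound fails for the same reason. The span of $x_{\pm\alpha_1},\dots,x_{\pm\alpha_k}$ is $2k$-dimensional and lies in $\f_{\1}$, not $\e_{\1}$. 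Its generic elements do not square to zero, so $L(\lambda)_x$ is undefined, while the elements $\sum_i c_ix_{\alpha_i}$ that do lie in $X$ are not in $\e_{\1}$. Your highest-weight argument also only handles a single simple isotropic root vector.
\item[(iii)] The same $\e$/$\f$ conflation appears in (a). The fixed points of a generic stabilizer form $\f_{\1}$, of dimension $2\operatorname{def}(\g)$, whereas the Cartan subspace $\e_{\1}$ has one basis vector per root. Once this is corrected, (a) is the routine case-by-case check you describe.
\end{enumerate}

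\textbf{What is missing is the criterion appropriate to these $\mathfrak{q}(1)$-type elements.} For $0\neq x\in\e_{\1}$, set $h=\tfrac12[x,x]$. On the nonzero eigenspaces of $h$, $x^2=h$ is invertible, so those eigenspaces are automatically projective over $\langle x\rangle$. Hence $M|_{U(\langle x\rangle)}$ fails to be projective exactly when $x$ is not exact on the $0$-eigenspace $M^{h}$. In particular, a finite-dimensional relatively projective $\langle x\rangle$-module is a sum of two-dimensional modules $\mathbb{C}v\oplus\mathbb{C}xv$, so it has superdimension $0$.

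The paper gives no proof of this theorem; it cites \cite{BKN09} for $\mathfrak{gl}(m|n)$ and \cite{Kuj12}. The superdimension fact above is the lever behind the generalized Kac--Wakimoto approach of \cite{Kuj12}. Suppose $\g'\subseteq\g$ has defect $k=\operatorname{atyp}(\lambda)$, meets $\e_{\1}$ in a $k$-dimensional subspace, and $L(\lambda)|_{\g'}$ has a direct summand of nonzero superdimension. Then that subspace lies in $\mathcal V^{\operatorname{rank}}_{(\e,\e_{\0})}(L(\lambda))$. For maximal atypicality this summand condition is the Kac--Wakimoto conjecture, proved by Serganova.

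The upper bound must likewise be proved by controlling exactness of $x$ on $L(\lambda)^{h}$, not through $X_{L(\lambda)}$. A Duflo--Serganova-style route is not hopeless: $\ker x/\operatorname{im}x$ on $M^{h}$ is the natural extension of $\operatorname{DS}_x$ to odd $x$ with $[x,x]$ semisimple, and it preserves superdimension. However, the atypicality bounds for that extended functor are exactly what would need proof, and your proposal does not supply them.
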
 

We note that the support variety ${\mathcal V}_{(\g,\g_{\0})}(-)$ does not detect projectivity in the category ${\mathcal F}_{({\mathfrak g},{\mathfrak g}_{\0})}$ because there are infinitely many irreducible representations in 
${\mathcal F}_{({\mathfrak g},{\mathfrak g}_{\0})}$ and the cohomology ring is too small. 

\section{Parabolic Category $\cO$ for Lie Superalgebras}\label{S:catO}

In the section, we will present the basic construction and fundamental results in \cite{LNW25} for the parabolic Category $\cO$ for Lie superalgebras. Throughout, let 
$\fg=\fg_{\0} \oplus \fg_{\1}$ be a finite-dimensional Lie superalgebra over $\CC$.

\subsection{The Definition}
Let $\fa\subseteq \fg$ be a Lie subsuperalgebra. Assume for the definition below,  $\fg = \fa \oplus \fa'$ as vector spaces for some vector space $\fa'$ which is stable under $\fa$.
Moreover, let $\cC_{(\fg, \fa)}$ be the full subcategory of $\fg$-Mod such that each object $M \in \cC_{(\fg, \fa)}$ is a direct sum of finite-dimensional $\fa$-modules.
When $\fg = \fg_\0$ is a Lie algebra with $\fa = \ft_\0$ being its Cartan subalgebra, $C_{(\fg_0, \ft_0)}$ is the natural category of  weight modules in $\fg_\0$-Mod.

\defn\label{def:O}
Let $\fg$ admits a  decomposition $\fg = \fn^+ \oplus \fa \oplus \fn^-$ of Lie sub-superalgebras. Set ${\mathfrak p}={\mathfrak a}\oplus {\mathfrak n}^{+}$. 
Let $\cO^\fp$ be the full subcategory of $\fg$-Mod such that each module $M \in \cO^\fp$ satisfies the following conditions:
\begin{enumerate}[(O1)]
\item $M$ is finitely generated over $U(\fg)$.
\item $M$ is a direct sum of finite-dimensional $\fa$-modules and is semisimple as ${\mathfrak a}_{\0}$-module.
\item $M$ is locally finite with respect to $\fn^+$.
\end{enumerate}
\enddefn

For many Lie superalgebras, one can construct  interesting examples of $\cO^\fp$ arise from the notion of a principal parabolic subalgebra $\fp$ introduced by Grantcharov and Yakimov. 
The reader is referred to \cite[Section 4]{LNW25}.

\subsection{Important facts about ${\mathcal O}^{{\mathfrak p}}$} 
In this section, we will state some useful facts about the parabolic Category ${\mathcal O}^{\mathfrak p}$. We state the following basic properties. 

\begin{prop}\label{Cat O-Liealg prop}Let ${\mathcal O}^{{\mathfrak p}_{\0}}$ be the parabolic Category ${\mathcal O}$ associated with a complex semisimple Lie algebra $\mathfrak{g}_{\0}$. 
\begin{itemize} 
\item [(a)] If $M \in \cO^\fp$, then $M \in \cO^{\fp_{\0}}$.
\item[(b)] If ${\mathfrak p}$ is  a principal parabolic then every $M\in {\mathcal O}^{\mathfrak p}$ has finite length.
\item [(c)] If ${\mathfrak p}$ is a prinicipal parabolic then the simple modules are universally $\0$-bounded. 
\end{itemize} 
\end{prop}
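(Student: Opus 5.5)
For (a), the plan is to check the three axioms of ${\mathcal O}^{{\mathfrak p}_{\0}}$ for $M$ regarded as a ${\mathfrak g}_{\0}$-module via restriction, using that ${\mathfrak g}_{\0}={\mathfrak n}^{+}_{\0}\oplus{\mathfrak a}_{\0}\oplus{\mathfrak n}^{-}_{\0}$ and ${\mathfrak p}_{\0}={\mathfrak a}_{\0}\oplus{\mathfrak n}^{+}_{\0}$. For finite generation, PBW for Lie superalgebras shows $U({\mathfrak g})$ is a free (left or right) $U({\mathfrak g}_{\0})$-module of rank $2^{\dim{\mathfrak g}_{\1}}$, spanned by ordered monomials in a basis of ${\mathfrak g}_{\1}$. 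If $v_1,\dots,v_r$ generate $M$ over $U({\mathfrak g})$, then the finitely many vectors $y\,v_i$, with $y$ running over these odd monomials, generate $M$ over $U({\mathfrak g}_{\0})$. The condition of being semisimple over ${\mathfrak a}_{\0}$, with a decomposition into finite-dimensional pieces, is inherited directly from (O2): a finite-dimensional ${\mathfrak a}$-module is finite-dimensional over ${\mathfrak a}_{\0}$, and ${\mathfrak a}_{\0}$-semisimplicity is assumed. Local finiteness over ${\mathfrak n}^{+}_{\0}\subseteq{\mathfrak n}^{+}$ follows from (O3), since $U({\mathfrak n}^{+}_{\0})v\subseteq U({\mathfrak n}^{+})v$. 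Together these give $M\in{\mathcal O}^{{\mathfrak p}_{\0}}$.

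For (b), I would use part (a) together with the classical fact that every object of ${\mathcal O}^{{\mathfrak p}_{\0}}$ has finite length over ${\mathfrak g}_{\0}$ (cf. \cite{Hum08}). Any strictly increasing chain of ${\mathfrak g}$-submodules of $M$ is in particular a strictly increasing chain of ${\mathfrak g}_{\0}$-submodules. Hence the ${\mathfrak g}$-length of $M$ is bounded by its ${\mathfrak g}_{\0}$-length, which is finite. Stated this way the argument seems to need only (a), not principality. The principal-parabolic hypothesis is presumably there to guarantee that ${\mathfrak g}_{\0}$ is reductive and that ${\mathfrak p}_{\0}$ is a genuine parabolic of ${\mathfrak g}_{\0}$, as in \cite[Section 4]{LNW25}; otherwise ${\mathcal O}^{{\mathfrak p}_{\0}}$ would not be the classical category. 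I would check this against the Grantcharov--Yakimov definition.

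For (c), the plan is to exhibit each simple module as a quotient of an induced module and bound it directly. Let $L$ be simple in ${\mathcal O}^{\mathfrak p}$. By (O3) and finite-dimensionality of ${\mathfrak a}$-pieces, $L$ contains a finite-dimensional simple ${\mathfrak p}$-submodule $E$ annihilated by ${\mathfrak n}^{+}$. Hence $L$ is a quotient of $U({\mathfrak g})\otimes_{U({\mathfrak p})}E\cong U({\mathfrak n}^{-})\otimes E$. By PBW,
$$U({\mathfrak n}^{-})\otimes E\cong \Lambda^{\bullet}({\mathfrak n}^{-}_{\1})\otimes U({\mathfrak n}^{-}_{\0})\otimes E$$
as ${\mathfrak g}_{\0}$-modules up to filtration. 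This shows that $L|_{{\mathfrak g}_{\0}}$ has a finite filtration whose layers are quotients of parabolic Verma modules $U({\mathfrak g}_{\0})\otimes_{U({\mathfrak p}_{\0})}(\Lambda^{k}({\mathfrak n}^{-}_{\1})\otimes E)$. These are parametrized by finitely many ${\mathfrak p}_{\0}$-weights, bounded uniformly in terms of the highest weight and the fixed finite set of weights of $\Lambda^{\bullet}({\mathfrak n}^{-}_{\1})$; this uniform bound is what I take "universally $\0$-bounded" to mean. The main obstacle is pinning down the exact definition of universal $\0$-boundedness used in \cite{LNW25}, and verifying that principality makes ${\mathfrak n}^{\pm}_{\1}$ finite-dimensional ${\mathfrak a}_{\0}$-stable pieces compatible with the ${\mathfrak g}_{\0}$ triangular decomposition. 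Once these are settled, the filtration argument should suffice.
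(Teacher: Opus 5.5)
Parts (a) and (b) are correct, and they are the standard argument. For (a) you check the axioms after restricting to $\mathfrak g_{\bar 0}$. For (b) you note that a chain of $\mathfrak g$-submodules is a chain of $\mathfrak g_{\bar 0}$-submodules. Your reading of the principality hypothesis is also right. It is exactly what makes $\mathfrak p_{\bar 0}=\mathfrak t\oplus\bigoplus_{\alpha\in\Phi_{\bar 0},\,\mathcal H(\alpha)\geq 0}\mathfrak g_\alpha$ a genuine parabolic of the reductive Lie algebra $\mathfrak g_{\bar 0}$, with Levi factor $\mathfrak a_{\bar 0}$, so that the classical finite-length theorem applies. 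For an arbitrary decomposition (e.g.\ $\mathfrak n^+=\mathfrak a=0$, $\mathfrak n^-=\mathfrak g$) the category contains $U(\mathfrak g)$, and (b) fails in general.

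The gap is in (c). You never fix the property to be proved, so (c) is not actually established. What your sketch gives is information about each simple $L$ separately: a finite list of candidate $\mathfrak g_{\bar 0}$-highest weights, obtained from the highest weight of $L$ by shifts from a fixed finite set. That constrains \emph{where} the $\mathfrak g_{\bar 0}$-composition factors of $L$ can lie, not \emph{how many} there are. The natural reading of ``universally'' is a bound, on the $\mathfrak g_{\bar 0}$-length of $L$ or on its $\mathfrak g_{\bar 0}$-multiplicities, that does not depend on $L$. Along your route such a bound needs two inputs you do not provide.

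First, $E$ is a simple $\mathfrak a$-module, which in general is not $\mathfrak a_{\bar 0}$-simple once $\mathfrak a_{\bar 1}\neq 0$. For example, for $\mathfrak q(n)$ the Cartan subalgebra $\mathfrak h\subseteq\mathfrak a$ has $\mathfrak h_{\bar 1}\neq 0$. So $E$ has no single highest weight. You need that $E$ is a quotient of $\operatorname{Ind}_{\mathfrak a_{\bar 0}}^{\mathfrak a}E_0$, and hence has at most $2^{\dim\mathfrak a_{\bar 1}}$ composition factors over $\mathfrak a_{\bar 0}$. Relatedly, the layers are $\operatorname{Ind}_{\mathfrak p_{\bar 0}}^{\mathfrak g_{\bar 0}}\bigl(\Lambda^k(\mathfrak g_{\bar 1}/\mathfrak p_{\bar 1})\otimes E\bigr)$. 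Since $\mathfrak n^+_{\bar 0}$ need not act trivially on the inducing module, these layers carry parabolic Verma flags rather than being parabolic Verma modules.

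Second, and decisively, you need a bound on the lengths of (parabolic) Verma modules of $\mathfrak g_{\bar 0}$ that is uniform in the highest weight. This is not formal. It comes from Kazhdan--Lusztig theory, because only finitely many integral Weyl groups and singularity types occur.

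Granting that uniform bound, there is a shorter argument than yours.
\begin{itemize}
\item Take a simple $\mathfrak g_{\bar 0}$-submodule $S=L_{\bar 0}(\mu)$ of $L$; it exists because $L|_{\mathfrak g_{\bar 0}}$ has finite length.
\item Then $L$ is a quotient of $\operatorname{Ind}_{\mathfrak g_{\bar 0}}^{\mathfrak g}S$, whose restriction to $\mathfrak g_{\bar 0}$ is $\Lambda^\bullet(\mathfrak g_{\bar 1})\otimes S$.
\item This is a quotient of $\Lambda^\bullet(\mathfrak g_{\bar 1})\otimes M_{\bar 0}(\mu)$, which has a Verma flag with $2^{\dim\mathfrak g_{\bar 1}}$ terms.
\item Hence $\ell_{\mathfrak g_{\bar 0}}(L)\leq 2^{\dim\mathfrak g_{\bar 1}}\max_\nu\ell\bigl(M_{\bar 0}(\nu)\bigr)$.
\end{itemize}
The intended notion might instead bound the total multiplicity of a given simple $\mathfrak g_{\bar 0}$-module $S$ over \emph{all} simple objects of $\mathcal O^{\mathfrak p}$. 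In that case a module-by-module analysis like yours cannot reach it. You would have to count the simple quotients of the projective module $\operatorname{Ind}_{\mathfrak g_{\bar 0}}^{\mathfrak g}P_{\bar 0}(S)$ via Frobenius reciprocity.
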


Moreover, the categorical cohomology 
can be computed using relative Lie superalgebra cohomology. 

\begin{theorem}\label{T:cohomology}
If $\mathfrak p$ is a principal parabolic subalgebra of $\mathfrak g$, then the
homological algebra for $\mathcal O^{\mathfrak p}$ coincides with the relative
cohomology for $(\mathfrak g,{\mathfrak l}_{0})$. For
$M,N \in \mathcal O^{\mathfrak p}$ and $n \geq 0$,
\[
\operatorname{Ext}^n_{\mathcal O^{\mathfrak p}}(M,N)
\cong
\operatorname{Ext}^n_{(\mathfrak g,{\mathfrak l}_{0})}(M,N).
\]
\end{theorem}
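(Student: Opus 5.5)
The plan is to compare Yoneda Ext in $\mathcal O^{\mathfrak p}$ with the relative derived functors of $\operatorname{Hom}_{\mathfrak g}$ for the pair $(\mathfrak g,\mathfrak l_{\0})$. Here $\mathfrak l_{\0}$ denotes the even Levi factor attached to the principal parabolic, so that $\mathcal O^{\mathfrak p}\subseteq \mathcal C_{(\mathfrak g,\mathfrak l_{\0})}$ by (O2) together with Proposition~\ref{Cat O-Liealg prop}(a). The standard route is to show that $\mathcal O^{\mathfrak p}$ has enough objects which are simultaneously projective in $\mathcal O^{\mathfrak p}$ and relatively $(\mathfrak g,\mathfrak l_{\0})$-projective. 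Then one resolution computes both Ext groups. The natural candidates are the modules
$$P=U(\mathfrak g)\otimes_{U(\mathfrak l_{\0})} V$$
(or suitable truncations of them), where $V$ is a finite-dimensional simple $\mathfrak l_{\0}$-module. These are relatively projective by construction. Frobenius reciprocity gives
$$\operatorname{Hom}_{\mathfrak g}(P,M)\cong \operatorname{Hom}_{\mathfrak l_{\0}}(V,M).$$
This functor is exact on $\mathcal C_{(\mathfrak g,\mathfrak l_{\0})}$, since objects are semisimple over $\mathfrak l_{\0}$.

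The key steps are as follows. (1) Show that every $M\in\mathcal O^{\mathfrak p}$ is a quotient of a finite direct sum of such induced modules. This uses (O1) and (O2): a finite-dimensional $\mathfrak l_{\0}$-stable generating subspace exists and is semisimple. (2) Since the induced module is generally not in $\mathcal O^{\mathfrak p}$, because of local finiteness for $\mathfrak n^{+}$, replace it by the largest quotient lying in $\mathcal O^{\mathfrak p}$. Alternatively, use the principal parabolic property, that $\mathfrak n^{+}$ is concentrated in suitable degrees relative to a grading element, to truncate by a finite-dimensional $\mathfrak n^{+}$-quotient. The goal is a module of the form
$$U(\mathfrak g)\otimes_{U(\mathfrak p)}\bigl(U(\mathfrak p)\otimes_{U(\mathfrak l_{\0})}V\bigr)/(\text{high }\mathfrak n^{+}\text{-degree}),$$
parallel to the construction of projectives in classical $\mathcal O^{\mathfrak p_{\0}}$. (3) Verify that these truncated modules are projective in $\mathcal O^{\mathfrak p}$ and remain relatively projective. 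Using the finite-length result, Proposition~\ref{Cat O-Liealg prop}(b), one may instead work blockwise or with weight truncations, where the truncated module splits off from a relatively projective induced module.

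(4) With enough such objects, build a resolution of $M$ inside $\mathcal O^{\mathfrak p}$ by modules that are both projective in $\mathcal O^{\mathfrak p}$ and $(\mathfrak g,\mathfrak l_{\0})$-projective. It is automatically $(\mathfrak g,\mathfrak l_{\0})$-split, since everything is semisimple over $\mathfrak l_{\0}$. Applying $\operatorname{Hom}_{\mathfrak g}(-,N)$ then computes both $\operatorname{Ext}^n_{\mathcal O^{\mathfrak p}}(M,N)$ and $\operatorname{Ext}^n_{(\mathfrak g,\mathfrak l_{\0})}(M,N)$, which gives the isomorphism. An alternative I would keep in reserve is to compare Yoneda extensions directly. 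One checks that any extension in $\mathcal C_{(\mathfrak g,\mathfrak l_{\0})}$ of objects of $\mathcal O^{\mathfrak p}$ lies in $\mathcal O^{\mathfrak p}$, that is, $\mathcal O^{\mathfrak p}$ is a Serre subcategory closed under extensions. One then proves effaceability of relative Ext within $\mathcal O^{\mathfrak p}$ (as in Cline–Parshall–Scott's criterion), which reduces to the same truncation construction.

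I expect step (3) to be the main obstacle: producing projectives in $\mathcal O^{\mathfrak p}$ that are also relatively projective. Unlike the Lie algebra case, $\mathcal O^{\mathfrak p}$ may lack enough projectives when truncation is unavailable, so this is where principality must enter. I would first use the grading element $h$ defining the principal parabolic, for which $\mathfrak n^{\pm}$ have positive and negative $h$-eigenvalues. I would truncate $U(\mathfrak g)\otimes_{U(\mathfrak l_{\0})}V$ by killing $h$-eigenvalues above a bound. I would then check that, for modules whose $h$-eigenvalues are bounded above, which is guaranteed by the universal $\0$-boundedness of Proposition~\ref{Cat O-Liealg prop}(c), Hom from the truncation agrees with Hom from the full induced module. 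This makes the truncation projective in the truncated subcategory, and a limiting argument over the bound finishes the proof.
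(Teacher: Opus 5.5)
\textbf{Gap: Step (3) cannot be carried out.} Unless $\mathfrak p_{\0}=\mathfrak g_{\0}$, the only object of $\mathcal O^{\mathfrak p}$ that is $(\mathfrak g,\mathfrak l_{\0})$-projective is $0$.

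\begin{itemize}
\item A relatively projective $P$ is a direct summand of $U(\mathfrak g)\otimes_{U(\mathfrak l_{\0})}P$, because the action map onto $P$ is $\mathfrak l_{\0}$-split.
\item By PBW, with a basis of $\mathfrak n^+_{\0}$ ordered first and a basis of $\mathfrak l_{\0}$ last, this induced module is free as a left $U(\mathfrak n^+_{\0})$-module.
\item $U(\mathfrak n^+_{\0})$ is a domain. So once $\mathfrak n^+_{\0}\neq 0$, for every $v\neq 0$ the submodule $U(\mathfrak n^+_{\0})v\cong U(\mathfrak n^+_{\0})$ is infinite-dimensional.
\item Hence no nonzero vector of a relatively projective module is $\mathfrak n^+$-finite, contradicting (O3).
\end{itemize}
In particular, no truncation can ``split off from a relatively projective induced module''. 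What your last paragraph actually gives is that truncations are projective in (a truncated piece of) $\mathcal O^{\mathfrak p}$. That is a statement about $\operatorname{Hom}$ only, so the unspecified ``limiting argument'' would have to supply all of the higher-degree content. Your reserve effaceability route has the same hole: effacing a class in $\operatorname{Ext}^n_{(\mathfrak g,\mathfrak l_{\0})}(M,N)$ by a cover $P\twoheadrightarrow M$ requires exactly the vanishing described next.

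\textbf{The missing idea: relative acyclicity instead of relative projectivity.} What you need is
\[
\operatorname{Ext}^n_{(\mathfrak g,\mathfrak l_{\0})}(P,N)=0 \quad\text{for } n>0,\ P \text{ projective in } \mathcal O^{\mathfrak p},\ N\in\mathcal O^{\mathfrak p}.
\]
With this, a projective resolution of $M$ in $\mathcal O^{\mathfrak p}$ is $\mathfrak l_{\0}$-split and acyclic for relative Ext, so your Step (4) goes through unchanged. The paper quotes the theorem from Lai--Nakano--Wilbert without proof; what follows is the standard argument, not the paper's.

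\begin{itemize}
\item \emph{Projectives need no truncation.} Take $U(\mathfrak g)\otimes_{U(\mathfrak g_{\0})}P_{\0}$ with $P_{\0}$ projective in $\mathcal O^{\mathfrak p_{\0}}$. These lie in $\mathcal O^{\mathfrak p}$ since $\dim\mathfrak g_{\1}<\infty$. They are projective there because induction is left adjoint to the exact restriction. They cover every object because objects of $\mathcal O^{\mathfrak p}$ restrict into $\mathcal O^{\mathfrak p_{\0}}$.
\item \emph{Reduction to the even part.} Shapiro's lemma gives $\operatorname{Ext}^n_{(\mathfrak g,\mathfrak l_{\0})}(U(\mathfrak g)\otimes_{U(\mathfrak g_{\0})}P_{\0},N)\cong\operatorname{Ext}^n_{(\mathfrak g_{\0},\mathfrak l_{\0})}(P_{\0},N)$. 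This vanishes for $n>0$ by the reductive case.
\item \emph{The reductive case.} Write $P_{\0}$ as a summand of $M_{\mathfrak p_{\0}}(\lambda)\otimes E$ with $\lambda$ maximal in its linkage class. Frobenius reciprocity and Hochschild--Serre reduce the Ext group to $\operatorname{Hom}_{\mathfrak l_{\0}}\bigl(L(\lambda),H^n(\mathfrak n^+_{\0},X)\bigr)$, where $X$ is the $\lambda$-block component of $E^*\otimes N$. All weights of $X$ are $\le\lambda$. Weight $\lambda$ in degree-$n$ cochains would need a weight $\lambda+\psi$ of $X$, with $\psi$ a nonzero sum of positive roots. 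So the group vanishes for $n\ge 1$.
\end{itemize}
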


 \subsection{Construction of the Intermediate Modules}
Suppose that $\fp$ is a principal parabolic subalgebra corresponding to the principal parabolic subset $P_\cH$ for some functional $\cH\in V^*$.
Then, $\fp = \fa \oplus \fn^+$, where $\fa = \bigoplus_{\alpha \in \Phi^0_\cH} {\mathfrak g}_{\alpha}$ is the corresponding ``Levi subalgebra". Let $\Lambda_{\mathfrak a}$ index the non-isomorphic simple $\fa$-modules in $\cC_{(\fa, \fa_\0)}$.  

For $\lambda_{0}\in \Lambda_{\mathfrak a}$, let $L_{\mathfrak a}(\lambda_{0})$ be the 
corresponding simple module and $I_{\mathfrak a}(\lambda_{0})$ be its injective hull.  A simple module (up to parity) in $\cO^\fp$ can be realized as a quotient of $U({\mathfrak g})\otimes _{U({\mathfrak p})} L_{\mathfrak a}(\lambda_{0})$ for some $\lambda_{0}\in \Lambda_{\mathfrak a}$. Considering parity, set $\Lambda:= \Lambda_{\mathfrak a} \times \{0,1\}/\sim$, where two simple modules are  identified when the module and its parity change are isomorphic. For $\lambda\in \Lambda$, let $L(\lambda)$ be the corresponding simple module in $\cO^\fp$. 

One can put a proset structure on $\Lambda$ as follows. Let $\mu, \lambda \in \Lambda$ and $\sigma_{1}, \sigma_{2}$ be weights such that 
$I_{\mathfrak a}(\mu_{0})$ (resp. $I_{\mathfrak a}(\lambda_{0})$) has weights $\gamma$ such that $\overline{\gamma}=\overline{\sigma}_{1}$ (resp. $\overline{\gamma}=\overline{\sigma}_{2}$). 
Then $\Lambda$ is a proset via
\eq\label{eq:Ofaposet}
(\mu_{0}, i) \leq (\lambda_{0}, j) \in \Lambda \quad \Leftrightarrow \quad i=j, \ \sigma_{1}  \leq \sigma_{2} \quad \Leftrightarrow  \quad i=j,\ \overline{\sigma}_{1}  \leq \overline{\sigma}_{2}.
\endeq

Let $\cF$ be the functor that takes the largest submodule in ${\mathcal O}^{\mathfrak p}$. For $\lambda =(\lambda_0, i)\in \Lambda$, 
denote the (co)induced modules from relative injectives and irreducibles, respectively, by 
 \eq\label{eq:ninjDirr}
\ninj(\lambda) := \Pi^i \cF(\Hom_{U(\fp^-)} (U(\fg), I_\fa(\lambda_0))),
\quad
\Dirr(\lambda) := \Pi^i U(\fg) \otimes_{U(\fp)} L_\fa(\lambda_0).
 \endeq
 
In this setup, one can then make use of the tools of homological algebra, along with the parity change functor $\Pi$ that interchanges the $\ZZ_2$-grading of a $\fg$-module.

\subsection{Fundamental results about $\ninj$-filtrations} 

We start by stating an important Ext-orthogonality property between the costandard and standard modules.

\begin{prop}\label{prop:exto}
Let $\lambda, \mu \in \Lambda$. Then,
$$
\Ext^n_{\cO^\fp}(\Dirr(\lambda), \ninj(\mu)) = \begin{cases} 
\CC &\tif n=0,\  \lambda = \mu;
\\
0 &\textup{otherwise}.
\end{cases}
$$
\end{prop}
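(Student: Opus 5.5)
The plan is to pass from $\cO^\fp$ to relative cohomology, and then to use Frobenius reciprocity twice: once on the induced side and once on the coinduced side. This reduces everything to an $\Ext$ computation over $\fa$.

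\textbf{Step 1: reduce to relative cohomology.} By Theorem~\ref{T:cohomology},
$$\Ext^n_{\cO^\fp}(\Dirr(\lambda),\ninj(\mu))\cong \Ext^n_{(\fg,\mathfrak l_0)}(\Dirr(\lambda),\ninj(\mu)).$$

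\textbf{Step 2: remove the functor $\cF$.} Write $\widetilde\nabla(\mu)=\Pi^j\Hom_{U(\fp^-)}(U(\fg),I_\fa(\mu_0))$, so that $\ninj(\mu)=\cF(\widetilde\nabla(\mu))$. The functor $\cF$ is right adjoint to the inclusion $\cO^\fp\hookrightarrow \cC_{(\fg,\fa_{\0})}$. I would argue that $\cF$ sends coinduced modules $\Hom_{U(\fp^-)}(U(\fg),I)$, with $I$ injective in $\cC_{(\fa,\fa_\0)}$, to acyclic objects. Granting this, a Grothendieck spectral sequence gives
$$\Ext^n_{\cO^\fp}(M,\ninj(\mu))\cong \Ext^n_{(\fg,\fa_\0)}(M,\widetilde\nabla(\mu))$$
for $M\in\cO^\fp$. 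An alternative is to use that $\Dirr(\lambda)$ lies in $\cO^\fp$ and admits a relatively projective resolution by modules $U(\fg)\otimes_{U(\fp)}(-)$ whose terms stay in $\cO^\fp$ up to the truncation.

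\textbf{Step 3: apply Shapiro's lemma on the coinduced side.} Coinduction $\Hom_{U(\fp^-)}(U(\fg),-)$ is exact, since $U(\fg)$ is free over $U(\fp^-)$ by PBW, and it is right adjoint to restriction. This gives
$$\Ext^n_{(\fg,\fa_\0)}(\Dirr(\lambda),\widetilde\nabla(\mu))\cong \Ext^n_{(\fp^-,\fa_\0)}(\Dirr(\lambda),\Pi^jI_\fa(\mu_0)),$$
where $\fn^-$ acts trivially on $I_\fa(\mu_0)$.

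\textbf{Step 4: compute the restriction of $\Dirr(\lambda)$.} By PBW, $\Dirr(\lambda)|_{\fp^-}\cong U(\fn^-)\otimes L_\fa(\lambda_0)$, so $\Dirr(\lambda)$ restricted to $\fp^-$ is induced from $\fa$. Shapiro's lemma in the other direction then yields
$$\Ext^n_{(\fa,\fa_\0)}(\Pi^iL_\fa(\lambda_0),\Pi^jI_\fa(\mu_0)).$$
This vanishes for $n>0$ because $I_\fa(\mu_0)$ is injective in $\cC_{(\fa,\fa_\0)}$. For $n=0$ it equals $\Hom_\fa(\Pi^iL_\fa(\lambda_0),\Pi^jI_\fa(\mu_0))$. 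Since the socle of $I_\fa(\mu_0)$ is $L_\fa(\mu_0)$, this Hom space is $\CC$ when $\lambda=\mu$ in $\Lambda$, counting the parity identification, and $0$ otherwise. A small check is needed when $L\cong\Pi L$: there one must verify that the even Hom is still one-dimensional under the $\sim$ convention.

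\textbf{Main obstacle.} I expect the hard step to be Step 2, showing that truncating with $\cF$ does not change $\Ext$ into $\ninj(\mu)$. This needs either that $\cF$ preserves injectivity of the relevant coinduced objects, or that the higher derived functors $R^k\cF$ vanish on them. I would first try to prove that $\widetilde\nabla(\mu)$ is injective in $\cC_{(\fg,\fa_\0)}$. Then $\cF(\widetilde\nabla(\mu))=\ninj(\mu)$ would be injective relative to a suitable truncated subcategory, namely $\cO^\fp$ restricted to weights bounded by $\mu$ in the proset, and I would use the finite-length statement in Proposition~\ref{Cat O-Liealg prop}(b) to pass between the two categories.
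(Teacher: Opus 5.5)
Your Steps 1, 3 and 4 follow the route the paper itself uses. In the proof of Lemma~\ref{finiteness of Ext with costandard} it goes from $\Ext^n_{\cO^\fp}(N,\ninj(\mu))$ to $\Ext^n_{(\fp^-,\fa_{\0})}(N,I_\fa(\mu_0))$ by Theorem~\ref{T:cohomology} and Frobenius reciprocity. For $N=\Dirr(\lambda)$, which is free over $U(\fn^-)$, this collapses to $\Hom_\fa(\Pi^iL_\fa(\lambda_0),\Pi^jI_\fa(\mu_0))$, exactly as in your Step 4.

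\textbf{The gap is Step 2.} You only grant it, and the argument you propose for it fails.
\begin{itemize}
\item The module $\widetilde\nabla(\mu)$ is not injective in $\cC_{(\fg,\fa_{\0})}$; in general it is not even an object there. By PBW it is the full product $\prod_{c\geq 0}\Hom_\CC(U(\fn^+)_c,\Pi^jI_\fa(\mu_0))$, where $U(\fn^+)_c$ is the finite-dimensional $\cH$-degree-$c$ part of $U(\fn^+)$. Such a product is not $\fa_{\0}$-semisimple.
\item Its $\fa_{\0}$-finite part is not injective either. For $\fg=\mathfrak{sl}_2$ and $\fa=\ft$, that part is the dual Verma module $\nabla(-2)=L(-2)$, while $\Ext^1_{\cO}(L(0),L(-2))\neq 0$. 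The underlying reason: coinduction preserves relative injectivity, but $I_\fa(\mu_0)$ inflated to $\fp^-$ with $\fn^-$ acting trivially is not relatively injective for $(\fp^-,\fa_{\0})$.
\item The fallback via injectivity of $\ninj(\mu)$ in a truncated subcategory does not help. That injectivity is usually derived from the orthogonality you are proving, not used as an input. Transferring it back to $\cO^\fp$ needs a separate Ext-comparison, and it says nothing when $\lambda\not\leq\mu$.
\end{itemize}

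\textbf{The missing observation: $\cF$ does almost nothing here, so no acyclicity statement is needed.}
\begin{itemize}
\item The $c$-th factor above is a finite-dimensional $\fa$-module. All its $\ft$-weights have $\cH$-value equal to the common $\cH$-value of the weights of $I_\fa(\mu_0)$, minus $c$.
\item Hence the $\fa_{\0}$-finite vectors of $\widetilde\nabla(\mu)$ form the submodule $\bigoplus_{c}\Hom_\CC(U(\fn^+)_c,\Pi^jI_\fa(\mu_0))$.
\item This submodule satisfies (O2). It satisfies (O3) because $\fn^+$ strictly raises $\cH$-values, and these are bounded above.
\item It is finitely generated because it has finite length. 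Up to $\Pi^j$ it is the restricted dual of $U(\fg)\otimes_{U(\fp^-)}I_\fa(\mu_0)^*$. That module lies in $\cO^{\fp^-}$ for the opposite principal parabolic, so Proposition~\ref{Cat O-Liealg prop}(b) applies to it.
\item Condition (O2) forces every submodule lying in $\cO^\fp$ to consist of $\fa_{\0}$-finite vectors. So this direct sum is exactly $\ninj(\mu)$.
\end{itemize}

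Now let $P_\bullet\to M$ be any $(\fg,\fa_{\0})$-relatively projective resolution of $M\in\cO^\fp$. Each $P_k$ is $\fa_{\0}$-semisimple, so $\Hom_\fg(P_\bullet,\widetilde\nabla(\mu))=\Hom_\fg(P_\bullet,\ninj(\mu))$. Your Step 2 isomorphism therefore holds on the nose, with no spectral sequence.

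Equivalently, $\ninj(\mu)$ is the right adjoint of restriction $\cC_{(\fg,\fa_{\0})}\to\cC_{(\fp^-,\fa_{\0})}$ evaluated at $\Pi^jI_\fa(\mu_0)$, so Shapiro's lemma applies to it directly. This is the ``Frobenius reciprocity'' step the paper invokes. With it, your Steps 3 and 4 complete the proof.
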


For costandardly stratified categories, it is important to find a criterion in order to construct $\ninj$-filtrations. The following results were stated in 
\cite[Section 7.2]{LNW25}. Note that the proposition is a variant of \cite[II Proposition 2.14]{Jan03} originally used for highest weight categories. 
.
\begin{prop}\label{lem:Jantzen}
Let $\fp$ be a principal parabolic subalgebra of $\fg$, and let $V$ be a module in ${\cO^{\fp}}$ which satisfies that
$\Hom_{\cO^\fp}(L(\mu), V) \neq 0$ for some $\mu \in \Lambda$.
Suppose that
\begin{enumerate}
\item[(a)] $\Hom_{\cO^\fp}(L(\nu), V) = 0$ for all $\nu < \mu$,
\item[(b)] $\Ext^1_{\cO^\fp}(\Dirr(\nu), V) = 0$ for all $\nu \leq \mu$,
\end{enumerate}
then $\ninj(\mu) \hookrightarrow V$.
\end{prop}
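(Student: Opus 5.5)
We follow the pattern of \cite[II Proposition 2.14]{Jan03}, with the costandard object $\ninj(\mu)$ in the role of the induced module $H^0(\mu)$ and $\Dirr(\nu)$ in the role of the Weyl modules. Fix a nonzero map $\varphi: L(\mu)\hookrightarrow V$; it is injective because $L(\mu)$ is simple. The aim is to extend $\varphi$ along the natural inclusion $L(\mu)\hookrightarrow \ninj(\mu)$ to a map $\Phi:\ninj(\mu)\to V$, and then to show that $\Phi$ is injective.

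\textbf{Socle of $\ninj(\mu)$ and the quotient.} We first need that $\ninj(\mu)$ has socle $L(\mu)$. Indeed, $\Hom_{\cO^\fp}(L(\nu),\ninj(\mu))$ embeds, via the adjunction defining $\ninj$ and Frobenius reciprocity for $\Hom_{U(\fp^-)}(U(\fg),-)$ followed by $\cF$, into $\Hom_{\fa}(L(\nu)^{\fn^-\text{-top}}, I_\fa(\mu_0))$. This forces the $\fa$-socle of the relevant piece to be $L_\fa(\mu_0)$, so $\nu=\mu$, and Proposition~\ref{prop:exto} in degree $0$ gives multiplicity one. We also need that every composition factor $L(\nu)$ of $Q:=\ninj(\mu)/L(\mu)$ satisfies $\nu<\mu$. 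This follows from the weight structure of the coinduced module: its weights are those of $I_\fa(\mu_0)$ shifted by weights of $S(\fn^{+})^*$-type, and the simple $\fa$-constituents other than the socle give strictly smaller elements of the proset, as in \eqref{eq:Ofaposet}. This triangularity is the super analogue of the classical fact, and I expect it to need the most care, because $I_\fa(\mu_0)$ is not simple and the order is only a preorder.

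\textbf{Extension step.} Apply $\Hom_{\cO^\fp}(-,V)$ to $0\to L(\mu)\to \ninj(\mu)\to Q\to 0$. The obstruction to extending $\varphi$ lies in $\Ext^1_{\cO^\fp}(Q,V)$, so it suffices to show $\Ext^1_{\cO^\fp}(L(\nu),V)=0$ for all $\nu<\mu$; the case of $Q$ then follows by induction on length, using Proposition~\ref{Cat O-Liealg prop}(b). To prove this vanishing, take $0\to K\to \Dirr(\nu)\to L(\nu)\to 0$ and consider the exact sequence
\[
\Hom(\Dirr(\nu),V)\to \Hom(K,V)\to \Ext^1(L(\nu),V)\to \Ext^1(\Dirr(\nu),V).
\]
The last term is $0$ by hypothesis (b). Here $K$ has composition factors $L(\tau)$ with $\tau<\nu$ by the dual triangularity for $\Dirr$, and every nonzero map $K\to V$ would produce $\Hom(L(\tau),V)\neq 0$ for some $\tau<\mu$, contradicting (a). Hence $\Hom(K,V)=0$ and the vanishing follows. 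Strictly, this argument uses that the head of $K$ consists of such $L(\tau)$, and finite length of $K$ gives this.

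\textbf{Injectivity.} Since $\varphi\neq 0$, we have $\ker\Phi\cap L(\mu)=0$. If $\ker\Phi\neq 0$, it contains a simple submodule, which must then lie in the socle of $\ninj(\mu)$, and that socle is $L(\mu)$. This is a contradiction, so $\ker\Phi=0$ and $\ninj(\mu)\hookrightarrow V$. The main obstacle is the socle and triangularity step for $\ninj(\mu)$ and $\Dirr(\nu)$ in the costandardly stratified (proset) setting. There I would rely on Proposition~\ref{prop:exto} together with the weight description of the (co)induced modules.
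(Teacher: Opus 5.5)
\textbf{Gap: the strict triangularity claim for $\ninj(\mu)/L(\mu)$ is false.} Your overall plan is the Jantzen-type argument the paper points to: extend $L(\mu)\hookrightarrow V$ along $L(\mu)\subseteq \ninj(\mu)$, then use that the socle of $\ninj(\mu)$ is $L(\mu)$. Your socle computation is fine, and so is your triangularity for $\mathrm{rad}\,\Dirr(\nu)$. The step that fails is the claim that every composition factor $L(\nu)$ of $Q=\ninj(\mu)/L(\mu)$ satisfies $\nu<\mu$.

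The top $\cH$-layer of $\ninj(\mu)$ is all of $I_\fa(\mu_0)$. The $\fa$-constituents of $I_\fa(\mu_0)$ other than its socle have the same $\overline{\sigma}$ as $\mu$. So by \eqref{eq:Ofaposet} they are at best equivalent to $\mu$, never strictly below it. Two cases show this concretely:
\begin{itemize}
\item In the $\mathfrak{q}(1)$ example of Section~\ref{S:KL}, $\ninj(L)=\Pi A$, so $Q=\Pi L$ sits at the same weight as $L$.
\item When $\fa$ contains a copy of $\mathfrak{gl}(1|1)$, the injective hull of the trivial module has the trivial module as both socle and head. So $L(\mu)$ itself recurs in $Q$.
\end{itemize}
Consequently, knowing $\Ext^1_{\cO^\fp}(L(\nu),V)=0$ only for $\nu<\mu$ does not kill the obstruction in $\Ext^1_{\cO^\fp}(Q,V)$. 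A telltale sign is that your argument never uses hypothesis (b) for $\nu$ equivalent to $\mu$ (e.g.\ $\nu=\mu$), which is exactly why (b) is stated with $\le$.

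\textbf{Repair.} What is true is that every composition factor $L(\nu)$ of $Q$ satisfies $\nu\le\mu$: equivalent to $\mu$ in the top layer, strictly smaller below it. Your own argument then gives $\Ext^1_{\cO^\fp}(L(\nu),V)=0$ for all $\nu\le\mu$, not just $\nu<\mu$:
\begin{itemize}
\item The factors $L(\tau)$ of $\mathrm{rad}\,\Dirr(\nu)$ satisfy $\tau<\nu\le\mu$, hence $\tau<\mu$. (If $\mu\le\tau$ then $\nu\le\tau$, contradicting $\tau<\nu$.)
\item So (a) forces $\Hom_{\cO^\fp}(\mathrm{rad}\,\Dirr(\nu),V)=0$.
\item Hypothesis (b) applies to $\Dirr(\nu)$ because $\nu\le\mu$.
\end{itemize}
Then $\Ext^1_{\cO^\fp}(Q,V)=0$ by induction on length, and the rest of your proof goes through unchanged.

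This repair requires the preorder to compare the constituents of $I_\fa(\mu_0)$ regardless of the parity label. If \eqref{eq:Ofaposet} is read literally with the condition $i=j$, then $L$ and $\Pi L$ are incomparable. In that reading, $V=L$ in the $\mathfrak{q}(1)$ example satisfies (a) and (b), since $\Ext^1_A(L,L)=0$, even though $\Pi A$ does not embed in $L$.
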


The following theorem give a cohomological criterion of the existence of $\ninj$-filtrations, and a generalized Brauer-Humphreys reciprocity law for 
$\ninj$-filtrations multiplicities inside an injective indecomposable module in $\cO^\fp$. 

\begin{theorem} \label{T:Jantzenlem}
Let $V\in \cO^\fp$. If $\dim \Hom_{\cO^\fp}(\ninj(\gamma), V) < \infty$ for all $\gamma \in \Lambda$. Then the following are equivalent:
\begin{enumerate}
\item[(a)] $V$ has a $\ninj$-filtration.
\item[(b)] $\Ext^n_{\cO^\fp}(\Dirr(\gamma), V) = 0$ for all $\gamma \in \Lambda$ and $n > 0$.
\item[(c)] $\Ext^1_{\cO^\fp}(\Dirr(\gamma), V) = 0$ for all $\gamma \in \Lambda$.
\end{enumerate}
Furthermore, if $I(\lambda)$ is the injective hull of $L(\lambda)$ in ${\mathcal O}^{\mathfrak p}$, then one has the following reciprocity law: 
$$[I(\lambda): \ninj(\gamma)]=[\Dirr(\gamma):L(\lambda)]$$ 
for $\lambda, \gamma\in \Lambda$. 
\end{theorem}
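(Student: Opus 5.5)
The plan is to follow the standard argument for highest weight categories (cf. \cite[II 4.16, 4.19]{Jan03}), replacing the ingredients there by Proposition~\ref{prop:exto} and Proposition~\ref{lem:Jantzen}.

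\textbf{The easy implications.} For (a)$\Rightarrow$(b), take a finite $\ninj$-filtration $0=V_0\subset V_1\subset\cdots\subset V_r=V$ with $V_k/V_{k-1}\cong\ninj(\mu_k)$. By Proposition~\ref{prop:exto}, $\Ext^n_{\cO^\fp}(\Dirr(\gamma),\ninj(\mu_k))=0$ for all $n>0$. Induction on $r$ with the long exact sequence in $\Ext^\bullet_{\cO^\fp}(\Dirr(\gamma),-)$ then gives the vanishing for $V$. The implication (b)$\Rightarrow$(c) is trivial.

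\textbf{The implication (c)$\Rightarrow$(a).} This is the heart of the proof, and I would argue by induction on
$$d(V):=\sum_{\gamma}\dim\Hom_{\cO^\fp}(\Dirr(\gamma),V).$$
\begin{enumerate}
\item[(1)] First show $d(V)<\infty$. Since $V$ has finite length by Proposition~\ref{Cat O-Liealg prop}(b), only finitely many $\gamma$ can give a nonzero term. Finiteness of each term should follow from the hypothesis $\dim\Hom(\ninj(\gamma),V)<\infty$, or from finite length plus finite-dimensionality of weight spaces.
\item[(2)] If $V\neq 0$, the finite-length socle of $V$ is nonzero. Among $\mu$ with $\Hom(L(\mu),V)\neq0$, choose $\mu$ minimal in the proset $\Lambda$. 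This is possible because only finitely many $\mu$ occur.
\item[(3)] Condition (a) of Proposition~\ref{lem:Jantzen} holds by minimality, and condition (b) there holds by (c). Hence $\ninj(\mu)\hookrightarrow V$. Set $V'=V/\ninj(\mu)$.
\item[(4)] Apply $\Hom(\Dirr(\gamma),-)$ to $0\to\ninj(\mu)\to V\to V'\to0$. Proposition~\ref{prop:exto} gives $\Ext^1(\Dirr(\gamma),V')\hookrightarrow\Ext^2(\Dirr(\gamma),\ninj(\mu))=0$, so $V'$ again satisfies (c). It also gives $d(V')=d(V)-1$.
\item[(5)] One still has to check that $V'$ satisfies the finiteness hypothesis. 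Then induction gives $V'$ a $\ninj$-filtration, and lifting it yields one for $V$.
\end{enumerate}
When $d(V)=0$, one needs $V=0$: a nonzero $V$ would, by steps (2)--(3), contain some $\ninj(\mu)$, and $\Hom(\Dirr(\mu),\ninj(\mu))=\CC$ then forces $d(V)>0$.

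\textbf{Main obstacle.} I expect the hardest points to be establishing a well-founded induction measure in the merely preordered, costandardly stratified setting, and checking that $V'$ inherits the Hom-finiteness hypothesis. For the latter, I would first try the exact sequence obtained by applying $\Hom(\ninj(\gamma),-)$, together with the fact that all Homs between objects of finite length in $\cO^\fp$ are finite-dimensional.

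\textbf{Reciprocity.} $I(\lambda)$ is injective, so it satisfies (c) and hence has a $\ninj$-filtration. Applying the exact functor $\Hom(\Dirr(\gamma),-)$ along this filtration and using Proposition~\ref{prop:exto}, one obtains
$$[I(\lambda):\ninj(\gamma)]=\dim\Hom_{\cO^\fp}(\Dirr(\gamma),I(\lambda)).$$
Since $I(\lambda)$ is the injective hull of $L(\lambda)$ and $\Dirr(\gamma)$ has finite length, the right-hand side equals $[\Dirr(\gamma):L(\lambda)]$. The endomorphism ring of $L(\lambda)$ is $\CC$ once parity is accounted for via $\Lambda$, which gives the reciprocity law.
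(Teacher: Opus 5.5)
The paper gives no proof of this theorem; it quotes it from LNW25, Section 7.2, and flags Proposition~\ref{lem:Jantzen} as the analogue of Jantzen's II.2.14. Your argument is the standard Jantzen-style deduction from Propositions~\ref{prop:exto} and~\ref{lem:Jantzen} that this setup is built for, and it is correct.

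The two obstacles you anticipate do not arise. By Proposition~\ref{Cat O-Liealg prop}(b), every object of $\cO^\fp$ has finite length, with finite-dimensional weight spaces. So $d(V)$ (or simply the length of $V$) is a finite, strictly decreasing induction measure, and the Hom-finiteness hypothesis holds automatically for $V'$. One small correction: finiteness of $\dim\Hom_{\cO^\fp}(\Dirr(\gamma),V)$ comes from Frobenius reciprocity and finite-dimensional weight spaces, not from the hypothesis on $\Hom_{\cO^\fp}(\ninj(\gamma),V)$.
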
 
 
 \subsection{Costandardly Stratified Categories} The modules for the blocks for the parabolic Category ${\mathcal O}$ for Lie algebras form highest weight categories. In \cite{LNW25}, the definition of {\em costandardly stratified category} was formulated 
 which is a true generalization of the original definition of highest weight categories as originally defined by Cline, Parshall and Scott (cf. \cite{CPS88}). Not all blocks for the parabolic Category ${\mathcal O}$ in the Lie superalgebra setting are highest weight categories, but 
 one can say the following. 
 
 \begin{theorem}\label{T:stratification} If $\fp$ is a principal parabolic subalgebra of $\fg$, then 
every block of Category $\cO^\fp$ is  {costandardly} stratified.
\end{theorem}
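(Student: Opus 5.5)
We verify, block by block, the axioms of a costandardly stratified category as formulated in \cite{LNW25}. Fix a block $\mathcal B$ of $\cO^\fp$, with $\Lambda_{\mathcal B}\subseteq\Lambda$ indexing its simples and carrying the preorder \eqref{eq:Ofaposet}. Concretely, we must show three things:
\begin{enumerate}
\item[(i)] $\mathcal B$ has enough injectives $I(\lambda)$, and each object has finite length;
\item[(ii)] each costandard object $\ninj(\lambda)$ has socle $L(\lambda)$, and every other composition factor $L(\mu)$ of $\ninj(\lambda)/L(\lambda)$ satisfies $\mu\le\lambda$ in the preorder (with the appropriate strictness only up to the equivalence class of $\lambda$);
\item[(iii)] $I(\lambda)$ admits a $\ninj$-filtration whose bottom term is $\ninj(\lambda)$ and whose remaining subquotients are $\ninj(\gamma)$ with $\gamma>\lambda$, or $\gamma$ in the equivalence class of $\lambda$.
\end{enumerate}

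For (i), finite length is Proposition~\ref{Cat O-Liealg prop}(b). To get injectives, we coinduce from $\fp^-$. The module $\cF(\Hom_{U(\fp^-)}(U(\fg),I_\fa(\lambda_0)))$ is injective in $\cO^\fp$ by Frobenius reciprocity together with the injectivity of $I_\fa(\lambda_0)$ in $\cC_{(\fa,\fa_\0)}$. Since $\cF$ is right adjoint to the inclusion, it preserves injectives. Finally, $I(\lambda)$ is realized as an indecomposable summand of this module.

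For (ii), the socle of $\ninj(\lambda)$ is computed by Frobenius reciprocity: $\Hom(L(\mu),\ninj(\lambda))$ is identified with $\fa$-homomorphisms from the $\fn^{+}$-invariants of $L(\mu)$ into $I_\fa(\lambda_0)$. These invariants are the top $\cH$-graded piece, namely $L_\fa(\mu_0)$, which forces $\mu=\lambda$. For the weight constraint, every weight of the coinduced module is of the form (weight of $I_\fa(\lambda_0)$) minus a nonnegative combination of roots in $\fn^-$. The defining condition of \eqref{eq:Ofaposet} then gives $\mu\le\lambda$ for all composition factors; the parity component is preserved because $\Pi$ commutes with everything in sight.

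Step (iii) is the main obstacle, because the preorder is not a partial order and the $I_\fa(\lambda_0)$ are not simple. The plan is to apply Theorem~\ref{T:Jantzenlem} to $V=I(\lambda)$. Injectivity gives $\Ext^1_{\cO^\fp}(\Dirr(\gamma),I(\lambda))=0$ for all $\gamma$. The finiteness hypothesis $\dim\Hom(\ninj(\gamma),I(\lambda))=[\ninj(\gamma):L(\lambda)]<\infty$ follows from the finite length established in (i). Hence $I(\lambda)$ has a $\ninj$-filtration. Its multiplicities are $[I(\lambda):\ninj(\gamma)]=[\Dirr(\gamma):L(\lambda)]$ by the reciprocity law in Theorem~\ref{T:Jantzenlem}. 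Dually to (ii), $\Dirr(\gamma)$ has head $L(\gamma)$ and all its other factors $L(\lambda)$ satisfy $\lambda\le\gamma$, so only $\gamma\ge\lambda$ can occur.

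It remains to order the filtration so that $\ninj(\lambda)$ sits at the bottom. For this we use Proposition~\ref{lem:Jantzen}: $L(\lambda)$ is the socle of $I(\lambda)$, so condition (a) holds, and condition (b) again follows from injectivity. This gives $\ninj(\lambda)\hookrightarrow I(\lambda)$. We then pass to the cokernel, which still has vanishing $\Ext^1$ against all $\Dirr(\gamma)$ by Proposition~\ref{prop:exto} and the long exact sequence, and we iterate. Termination within the block is guaranteed by finiteness of the multiplicities. This completes the verification of the costandardly stratified axioms.
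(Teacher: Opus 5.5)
\textbf{The gap is in step (i).} The module $\cF(\Hom_{U(\fp^-)}(U(\fg),I_\fa(\lambda_0)))$ that you call injective is, up to parity, exactly the costandard module $\ninj(\lambda)$ of \eqref{eq:ninjDirr}. Costandard modules are not injective in general. Frobenius reciprocity for coinduction from $\fp^-$ reads
\[
\Hom_{\cO^\fp}(M,\ninj(\lambda))\cong\Hom_{\fp^-}(M,I_\fa(\lambda_0))\cong\Hom_{\fa}(M/\fn^-M,\,I_\fa(\lambda_0)).
\]
So it passes through the $\fn^-$-coinvariants, not the $\fn^+$-invariants as you also write in (ii). Your socle conclusion in (ii) survives, because $L(\mu)/\fn^-L(\mu)$ is again the top piece $L_\fa(\mu_0)$. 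But $M\mapsto M/\fn^-M$ is only right exact, so injectivity of $I_\fa(\lambda_0)$ in $\cC_{(\fa,\fa_\0)}$ does not make $\Hom(-,\ninj(\lambda))$ exact. Its derived functors are $\Hom_\fa(H_n(\fn^-;M),I_\fa(\lambda_0))$, as computed in the proof of Lemma~\ref{finiteness of Ext with costandard}, and these are nonzero in general.

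A concrete counterexample: take $\fg=\mathfrak{sl}_2$ and $\fp=\mathfrak b$. Then $\ninj(-2)=L(-2)$, yet $\Ext^1_{\cO^\fp}(L(0),L(-2))\neq 0$, since $0\to L(-2)\to\Dirr(0)\to L(0)\to 0$ is nonsplit.

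Your own (ii) also rules out the claim. Since $\ninj(\lambda)$ has simple socle $L(\lambda)$, if it were injective it would equal $I(\lambda)$. The reciprocity law of Theorem~\ref{T:Jantzenlem} would then force every $\Dirr(\gamma)$ to be simple, making (iii) vacuous.

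Since (iii) rests entirely on the existence of the $I(\lambda)$, you need a correct source for them. One option is to quote \cite{LNW25}; the paper already presupposes $I(\lambda)$ in Theorem~\ref{T:Jantzenlem}. The other is to coinduce from a subalgebra for which the adjunction is exact. For example, coinduce injectives of $\cO^{\fp_\0}$ from $\fg_\0$ to $\fg$. The left adjoint of this coinduction is restriction, which is exact and lands in $\cO^{\fp_\0}$ by Proposition~\ref{Cat O-Liealg prop}(a). Since $U(\fg)$ is a finitely generated free $U(\fg_\0)$-module, one checks that the coinduced module lies in $\cO^\fp$. Then take indecomposable summands.

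\textbf{Step (iii) and the remaining corrections.} Granting injective hulls, your step (iii) is sound. It uses exactly the machinery the paper assembles before quoting the theorem from \cite{LNW25}; the paper itself gives no further argument.
\begin{itemize}
\item Injectivity gives the $\Ext^1$-vanishing needed in Theorem~\ref{T:Jantzenlem}.
\item The reciprocity law restricts the occurring $\ninj(\gamma)$ to $\gamma\geq\lambda$.
\item The cokernel iteration works because $\Ext^2_{\cO^\fp}(\Dirr(\gamma),\ninj(\mu))=0$ by Proposition~\ref{prop:exto}.
\end{itemize}

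Step (ii) needs two corrections.
\begin{itemize}
\item The weights of $\ninj(\lambda)$ are weights of $I_\fa(\lambda_0)$ minus sums of $\fn^+$-roots. They go down, which is what yields $\mu\le\lambda$; your sign points the wrong way.
\item More seriously, parity is not preserved. That $\Pi$ commutes with the constructions only gives $\ninj(\lambda_0,1)=\Pi\,\ninj(\lambda_0,0)$. It says nothing about the parity labels of composition factors. In the paper's own $\mathfrak q(1)$ example, $\ninj(L)=\Pi A$ has composition factors $L$ and $\Pi L$.
\end{itemize}
So the comparability in (ii), and dually for $\Dirr(\gamma)$ in (iii), must come from the weight component of the order, not from parity.
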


\subsection{Complexity}\label{SS:complexity} In 1977, Alperin first defined the complexity of a module in the context of modular representation theory of finite groups. In this context, the finite generation of cohomology allows one to conclude that the complexity any module is finite and 
bounded by the Krull dimension of the cohomology ring. 

The complexity of modules in ${\mathcal F}_{({\mathfrak g},{\mathfrak g}_{\0})}$ was first introduced by Boe, Kujawa and Nakano \cite{BKN10b} via rates of growth of the minimal projective resolutions. 
For Category ${\mathcal O}^{\mathfrak p}$, the projectives are infinite-dimensional, but one can define the {\em complexity} for $M \in \cO^\fp$ as  
\eq\label{def:cM}
c_\fp(M) := r(\Ext^{\bullet}_{\cO^\fp}(M, {\textstyle \bigoplus_{L \in \textup{Irr}\cO^\fp}} L )  ),
\endeq
Here $r(-)$ is the rate of growth of the dimensions. 

For Lie algebras, the Category ${\mathcal O}$ has finite homological dimension so the complexity of any module is zero. However, the Category ${\mathcal O}^{\mathfrak p}$ for Lie superalgebras has modules which admit infinite projective resolutions and the complexity 
can quite large, but finite as is shown in the following theorem. 
 
 \begin{theorem}\label{T:complexity}
If $\fp$ is a principal parabolic subalgebra of $\fg$ and $M \in \cO^\fp$ then 
$$c_\fp(M)\leq \dim \fg_\1.$$
\end{theorem}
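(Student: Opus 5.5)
We plan to bound the growth of $\Ext^{n}_{\cO^\fp}(M, \bigoplus_L L)$ by the dimension of an explicit relatively projective resolution, using Theorem~\ref{T:cohomology}. That theorem identifies $\Ext^n_{\cO^\fp}(M,N)$ with the relative cohomology $\Ext^n_{(\fg,\mathfrak l_{\0})}(M,N)$, where $\mathfrak l_{\0}$ is the even Levi part of the principal parabolic. This relative cohomology can be computed with the Koszul-type complex. The natural comparison is between the pair $(\fg,\mathfrak l_{\0})$ and the Lie algebra pair $(\fg_{\0},\mathfrak l_{\0})$. The latter governs $\cO^{\fp_{\0}}$, which has finite global dimension (bounded by $\dim \fg_{\0}/\mathfrak l_{\0}$).

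The first step is to construct a resolution of $M$ by modules of the form $U(\fg)\otimes_{U(\fg_{\0})} P$, with $P$ projective in $\cO^{\fp_{\0}}$. By Proposition~\ref{Cat O-Liealg prop}(a), the restriction $M|_{\fg_{\0}}$ lies in $\cO^{\fp_{\0}}$. Concretely, we take the relative Koszul resolution
\[
\cdots \to U(\fg)\otimes_{U(\fg_{\0})} \bigl(S^{k}(\fg_{\1})\otimes M\bigr)\to \cdots \to U(\fg)\otimes_{U(\fg_{\0})} M \to M \to 0,
\]
which is the super analogue of the Chevalley--Eilenberg complex relative to $\fg_{\0}$; odd exterior powers become symmetric powers. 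Its terms are acyclic for $\Hom$ out of $\cO^{\fp}$ relative to the pair $(\fg,\fg_{\0})$. Frobenius reciprocity then gives a spectral sequence
\[
E_1^{k,j}=\Ext^{j}_{\cO^{\fp_{\0}}}\bigl(S^{k}(\fg_{\1})\otimes M, N\bigr)\Rightarrow \Ext^{k+j}_{\cO^\fp}(M,N).
\]
Here $j$ ranges over the finite set $0\le j\le d:=\dim \fg_{\0}/\mathfrak l_{\0}$, because $\cO^{\fp_{\0}}$ has finite global dimension. Equivalently, we can filter the Koszul complex computing $(\fg,\mathfrak l_{\0})$-cohomology by odd degree, since its cochains are $\Hom_{\mathfrak l_{\0}}\bigl(\Lambda^{j}(\fg_{\0}/\mathfrak l_{\0})\otimes S^{k}(\fg_{\1})\otimes M, N\bigr)$.

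Next we take $N=\bigoplus_{L} L$. Then
\[
\dim \Ext^n_{\cO^\fp}(M,N)\le \sum_{j=0}^{d}\dim \Ext^{j}_{\cO^{\fp_{\0}}}\bigl(S^{n-j}(\fg_{\1})\otimes M, N|_{\fg_{\0}}\bigr).
\]
The key claim is that the right-hand side grows like $\dim S^{n}(\fg_{\1})$, which is a polynomial in $n$ of degree $\dim\fg_{\1}-1$, times a constant independent of $n$. This gives rate of growth at most $\dim\fg_{\1}$.

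To justify the claim, it suffices to show that for a module $X\in\cO^{\fp_{\0}}$ of finite length, $\sum_j \dim\Ext^j_{\cO^{\fp_{\0}}}(X,N|_{\fg_{\0}})$ is bounded by a constant times the length of $X$, with the constant independent of $X$. Since $S^{k}(\fg_{\1})\otimes M$ has length at most $\dim S^k(\fg_{\1})\cdot c_M$, the claim follows. This is the main obstacle. $N|_{\fg_{\0}}$ is an infinite direct sum, and we need uniform control of $\dim\Ext^j_{\cO^{\fp_{\0}}}(L_0, L|_{\fg_{\0}})$ summed over all simples $L$ of $\cO^\fp$, for each simple $L_0$ of $\cO^{\fp_{\0}}$. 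We would handle it in two steps. First, by Proposition~\ref{Cat O-Liealg prop}(c), simples of $\cO^\fp$ are universally $\0$-bounded, so each restriction $L|_{\fg_{\0}}$ has bounded length. Second, only finitely many $L$ share a $\fg_{\0}$-central character or block with a given $L_0$; this uses the finiteness of Kazhdan--Lusztig data for $\cO^{\fp_{\0}}$, whose Ext groups between simples have uniformly bounded dimension in each block. Combining these bounds, the constant is uniform and the estimate $c_\fp(M)\le\dim\fg_{\1}$ follows.
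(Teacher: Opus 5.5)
The survey states Theorem~\ref{T:complexity} without proof and imports it from \cite{LNW25}, so there is no in-paper argument to compare with; I am judging your argument on its own terms. Your reduction is sound. The relative Koszul resolution, Shapiro's lemma for $U(\fg)\otimes_{U(\fg_{\0})}-$ (which is exact and left adjoint to exact restriction, hence preserves projectives), and $\operatorname{gldim}\cO^{\fp_{\0}}\le\dim\fg_{\0}/\fa_{\0}$ together give your inequality for $\dim\Ext^n_{\cO^\fp}(M,N)$. The gap is in the final step: it carries all the content, and you assert it rather than prove it.

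The weights of $S^k(\fg_{\1})$ spread out as $k$ grows, so the composition factors $L_0$ of $S^k(\fg_{\1})\otimes M$ run through infinitely many blocks of $\cO^{\fp_{\0}}$. You therefore need one constant $C$ with $\sum_j\dim\Ext^j_{\cO^{\fp_{\0}}}(L_0,N)\le C$ for all simple $L_0$. This requires two separate uniformities, and your stated reasons do not establish either.

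\begin{enumerate}
\item[(1)] The number of simples $L$ of $\cO^\fp$ with a $\fg_{\0}$-composition factor in the block of $L_0$ must be bounded independently of $L_0$, not merely finite. This is a weight count, not Kazhdan--Lusztig theory. $L$ is a quotient of $\Dirr(\lambda)$, whose restriction to $\fg_{\0}$ is filtered by parabolic Verma modules induced from the $\fa_{\0}$-constituents of $\Lambda^{\bullet}(\fn^-_{\1})\otimes L_{\fa}(\lambda_0)$. Hence every $\fg_{\0}$-composition factor of $L$ has highest weight in $W\cdot(\lambda+\Gamma)$ for a fixed finite set $\Gamma$. It follows that $\lambda\in W\cdot\mu-\Gamma$, which leaves at most $|W|\,|\Gamma|$ top weights, each carrying boundedly many simple $\fa$-modules and parities.
\item[(2)] $\dim\Ext^j_{\cO^{\fp_{\0}}}(L_0,L_0')$ must be bounded uniformly over \emph{all} blocks. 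Being ``bounded in each block'' is automatic and does not suffice. The uniform bound is true, but only because blocks of $\cO^{\fp_{\0}}$ fall into finitely many equivalence classes (e.g.\ via Soergel's description in terms of the integral Weyl group and the singularity). That is a substantial theorem, and you would have to invoke it explicitly.
\end{enumerate}

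Similarly, your estimate $\ell(S^k(\fg_{\1})\otimes M)\le C\dim S^k(\fg_{\1})$ needs a uniform bound on the lengths of parabolic Verma modules for $\fg_{\0}$ (e.g.\ by induction with the Jantzen sum formula), not just on $\ell(M)$.

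You can bypass (2) entirely by moving $V=S^k(\fg_{\1})$ to the other side. We have $\Ext^j_{\cO^{\fp_{\0}}}(V\otimes M,N)\cong\Ext^j_{\cO^{\fp_{\0}}}(M,V^*\otimes N)$. Its dimension is at most $\dim\Hom(Q_j,V^*\otimes N)$ for one fixed finite resolution $Q_\bullet\to M|_{\fg_{\0}}$ by finitely generated projectives. For each of the finitely many $P_0(\mu)$ occurring in $Q_\bullet$,
\[
\dim\Hom(P_0(\mu),V^*\otimes N)=\sum_L[V^*\otimes L|_{\fg_{\0}}:L_0(\mu)]\le\Bigl(\sup_{L_0'}\sum_L[L|_{\fg_{\0}}:L_0']\Bigr)\cdot\#\{\text{indecomposable summands of }V\otimes P_0(\mu)\}.
\]
The last factor is at most $\dim V$ times the parabolic Verma flag length of $P_0(\mu)$. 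The supremum is finite by (1) together with the uniform bound on $\ell(L|_{\fg_{\0}})$ that you quote. This gives $\dim\Ext^n_{\cO^\fp}(M,N)\le C\sum_{j\le d}\dim S^{n-j}(\fg_{\1})$, and hence $c_\fp(M)\le\dim\fg_{\1}$, with no uniform Ext bounds across blocks.
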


Note that this is a generalization of a theorem of Coulembier and Serganova for basic classical Lie superalgebras and the standard Borel. See \cite[Theorem 5.6.1]{CS17}

\section{Character theory for Lie superalgebras}\label{S:KL}

\subsection{Characters in terms of $\Ext_{\mathcal{O}^{\mathfrak{p}}}$}Let $\mathfrak{p}$ be a principal parabolic. Recall the triangular decompositions
$$\g=\fn^{-}\oplus\fa\oplus\fn^{+},\;\;\fp=\fa\oplus\fn^+$$
Given $\lambda\in\Lambda$, recall the construction of the Kac induced and coinduced modules from equation \ref{eq:ninjDirr}
 \eq
\ninj(\lambda) := \Pi^i \cF(\Hom_{U(\fp^-)} (U(\fg), I_\fa(\lambda_0))),
\quad
\Dirr(\lambda) := \Pi^i U(\fg) \otimes_{U(\fp)} L_\fa(\lambda_0).
 \endeq
\noindent  Following \cite[7.1]{LNW25}, we let $\mathfrak{t}\subset\mathfrak{a}_{0}$ denote the maximal abelian Cartan subalgebra. Define $R_\ep:=\mathbb{Z}[\ep]/(\ep^2-1)$.
\begin{definition}
    Given $N\in\mathcal{O}^{\mathfrak{p}}$, define the character 
    $$\text{ch}_\ep(N):=\sum_{\lambda\in\mathfrak{t}^*}(\dim N_{\lambda,0}+\ep\dim N_{\lambda,1})e^{\lambda}$$
    And write $\text{ch}(N):=\text{ch}_{\ep=1}(N)$.
\end{definition}

    Define the \textit{Euler characteristic}, respectively, dual Euler characteristic as
    \begin{align}
   c_\mu(N):= \chi(\Ext_{\mathcal{O}^{\mathfrak{p}}}^\bullet(\Dirr(\mu),N))&:=\sum_{n\geq 0}(-1)^n\dim\Ext_{\mathcal{O}^{\mathfrak{p}}}^n(\Dirr(\mu),N)\\
       c^\vee_\mu(N):= \chi^\vee(\Ext_{\mathcal{O}^{\mathfrak{p}}}^\bullet(N,\ninj(\mu)))&:=\sum_{n\geq 0}(-1)^n\dim\Ext_{\mathcal{O}^{\mathfrak{p}}}^n(N,\ninj(\mu))
    \end{align}

The dual Euler characteristic $c^\vee_\mu(N)$ is a finite sum for $N\in\mathcal{O}^{\mathfrak{p}}$, as shown by the following lemma.
\begin{lemma}\label{finiteness of Ext with costandard}
    Suppose $N\in\mathcal{O}^{\mathfrak{p}}$. Then each $\Ext_{\mathcal{O}^{\mathfrak{p}}}^n(N,\ninj(\mu))$ is finite-dimensional and vanishes for sufficiently large $n$. Furthermore, if $L_{\mathfrak{a}}(\mu)$ is a projective module in $\mathcal{C}_{\mathfrak{a},\mathfrak{a}_0}$, then the same conclusion holds for  $\Ext_{\mathcal{O}^{\mathfrak{p}}}^n(\Dirr(\mu),N)$.
\end{lemma}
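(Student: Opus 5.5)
We reduce both statements to relative Lie superalgebra cohomology for an abelian pair, using Frobenius reciprocity. By Theorem~\ref{T:cohomology}, $\Ext^n_{\cO^\fp}(N,\ninj(\mu))\cong \Ext^n_{(\fg,\fl_0)}(N,\ninj(\mu))$. The module $\ninj(\mu)$ is obtained by applying $\cF$ to $\Hom_{U(\fp^-)}(U(\fg),I_\fa(\mu_0))$, where $\fp^-=\fa\oplus\fn^-$. We first show that $\cF$ is harmless at the level of Ext, i.e.\ that the coinduced module is $\cF$-acyclic with respect to $\Hom_{\cO^\fp}(N,-)$. This is the mechanism behind Proposition~\ref{prop:exto}, and we take it from the same source. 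A relative Shapiro lemma then gives
\[
\Ext^n_{\cO^\fp}(N,\ninj(\mu))\cong \Ext^n_{(\fp^-,\fl_0)}(N,I_\fa(\mu_0)).
\]
Next we apply the Hochschild--Serre spectral sequence for the ideal $\fn^-\trianglelefteq\fp^-$, with quotient $\fa$:
\[
E_2^{i,j}=\Ext^i_{(\fa,\fl_0)}\bigl(\mathbb C,\ \operatorname{H}^j(\fn^-,N^*\otimes I_\fa(\mu_0))\bigr)\Rightarrow \Ext^{i+j}_{(\fp^-,\fl_0)}(N,I_\fa(\mu_0)).
\]
Since $I_\fa(\mu_0)$ is injective in $\cC_{(\fa,\fa_0)}$, we rewrite this as $\Hom_\fa\bigl(\operatorname{H}_j(\fn^-,N),I_\fa(\mu_0)\bigr)$, and the sequence collapses. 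So it suffices to show two things: $\operatorname{H}_j(\fn^-,N)$ vanishes for $j>\dim\fn^-$, and $\Hom_\fa(\operatorname{H}_j(\fn^-,N),I_\fa(\mu_0))$ is finite-dimensional.

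The vanishing is not automatic, because $\fn^-$ has odd part and $\Lambda^\bullet$ is replaced by a supersymmetric power. We instead use that $N$ is finitely generated and locally $\fn^+$-finite, so by Proposition~\ref{Cat O-Liealg prop} it has finite length. By long exact sequences it is then enough to treat $N=L(\lambda)$, and even $N=\Dirr(\lambda)$, since simples are quotients of these; the standard dimension-shifting argument requires control in all degrees, which we arrange by induction along the finite composition series. For $N=\Dirr(\lambda)$ we have $N\cong U(\fn^-)\otimes L_\fa(\lambda_0)$ as an $\fn^-$-module, which is free, so $\operatorname{H}_j(\fn^-,N)=0$ for $j>0$ and $\operatorname{H}_0=L_\fa(\lambda_0)$. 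Proposition~\ref{prop:exto} already encodes this case.

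This reduction is the step I expect to be the main obstacle: the kernels of $\Dirr(\lambda)\twoheadrightarrow L(\lambda)$ are not of standard type. The cleaner route is to use that the $\fn^-$-homology of any $N$ in $\cO^\fp$ is controlled by its $\fn^-$-weight structure. Every weight space of $\operatorname{H}_j(\fn^-,N)$ is finite-dimensional and lies in finitely many $\fa$-isotypic pieces. The Ext is then nonzero only when $\mu$ is linked to a composition factor, and degree vanishing follows from a truncation argument that uses the finite length of $N$ together with the costandard stratification of Theorem~\ref{T:stratification}. That stratification implies the injective resolutions of $\ninj(\mu)$ by modules with $\ninj$-filtrations are finite within a block truncation.

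For the second statement we use adjunction: $\Ext^n_{\cO^\fp}(\Dirr(\mu),N)\cong\Ext^n_{(\fp,\fl_0)}(L_\fa(\mu_0),N)$. The Hochschild--Serre spectral sequence for $\fn^+\trianglelefteq\fp$ then gives
\[
E_2^{i,j}=\Ext^i_{(\fa,\fl_0)}\bigl(L_\fa(\mu_0),\operatorname{H}^j(\fn^+,N)\bigr).
\]
When $L_\fa(\mu_0)$ is projective, this collapses to $\Hom_\fa(L_\fa(\mu_0),\operatorname{H}^j(\fn^+,N))$. Because $N$ is locally $\fn^+$-finite with finite-dimensional $\fa$-isotypic components, these groups are finite-dimensional. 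They vanish for large $j$ by the same reduction to finite length, now using that the $\ninj(\gamma)$ are $\fn^+$-coinduced, so their $\fn^+$-cohomology is concentrated in degree $0$.
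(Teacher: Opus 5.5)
Your reduction to $\Hom_\fa\bigl(H_n(\fn^-;N),I_\fa(\mu)\bigr)$ matches the paper: Theorem~\ref{T:cohomology}, Frobenius reciprocity, and collapse of the Hochschild--Serre sequence because $I_\fa(\mu)$ is injective. So does the parallel reduction to $\Hom_\fa\bigl(L_\fa(\mu),H^n(\fn^+;N)\bigr)$, and your finite-dimensionality argument in each fixed degree is essentially fine.

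\textbf{The gap is the vanishing for $n\gg0$.} The sufficient condition you set yourself, $H_j(\fn^-;N)=0$ for $j>\dim\fn^-$, is false. Take $\fg=\mathfrak{gl}(1|1)$ with $\cH$ positive on the odd root $\alpha$. Then $\fa=\ft$, and $\fn^-=\CC f$, $\fn^+=\CC e$ are purely odd with $[f,f]=0$. For the trivial module $N=\CC\in\cO^\fp$ all Chevalley--Eilenberg differentials vanish, so $H_j(\fn^-;\CC)\cong S^j(\fn^-)\cong\CC_{-j\alpha}\neq0$ for every $j$. Similarly $H^j(\fn^+;\CC)\neq0$ for all $j$. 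Since this $N$ is already simple, no dimension shifting along composition series can give the vanishing you want. That holds whether you start from the $U(\fn^-)$-free modules $\Dirr(\lambda)$ or, in the second part, from the $\fn^+$-cofree modules $\ninj(\gamma)$.

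Your fallback via Theorem~\ref{T:stratification} is only asserted. Nothing in the paper says $\ninj(\mu)$ has a finite injective coresolution ``within a block truncation'', and globally it does not. In the same example, an atypical $\ninj(\lambda)$ has minimal coresolution $0\to\ninj(\lambda)\to I(\lambda)\to I(\lambda+\alpha)\to I(\lambda+2\alpha)\to\cdots$ (up to parity), which never terminates. Blocks also contain infinitely many simples, and you give no comparison between Ext in a truncation and Ext in $\cO^\fp$.

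\textbf{The missing idea:} you do not need the (co)homology itself to vanish, only its $\fa$-maps into $I_\fa(\mu)$, and the grading by $\cH$ kills these. Bound $\dim\Hom_\fa\bigl(H_n(\fn^-;N),I_\fa(\mu)\bigr)$ by $\dim\Hom_{\ft_0}\bigl(\Lambda_s^n(\fn^-)\otimes N,I_\fa(\mu)\bigr)$, and use three facts:
\begin{enumerate}
\item All weights of $I_\fa(\mu)$ have the same $\cH$-value.
\item The $\cH$-values of weights of $N$ are bounded above by some $m_N$, because $N$ is finitely generated and locally $\fn^+$-finite and $U(\fg)=U(\fn^-)U(\fa)U(\fn^+)$.
\item Each root of $\fn^-$ has $\cH$-value at most $-c$ for a fixed $c>0$.
\end{enumerate}
Hence every weight $\gamma+\psi_-$ of $\Lambda_s^n(\fn^-)\otimes N$ has $\cH$-value at most $m_N-nc$. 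Once $n\gg0$ this lies below the $\cH$-value of $I_\fa(\mu)$, so the Hom space is zero.

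Finite-dimensionality in each degree follows as well. Only the finitely many weights of $I_\fa(\mu)$ occur, and $\Lambda_s^n(\fn^-)\otimes N$ has finite-dimensional weight spaces. The same argument applied to $\Hom\bigl(\Lambda_s^n(\fn^+),N\bigr)$, whose weights are $\gamma-\psi_+$, handles $\Hom_\fa\bigl(L_\fa(\mu),H^n(\fn^+;N)\bigr)$. This is the paper's proof, and it needs neither finite length nor the stratification.
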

 \begin{proof}
Let us first prove the first assertion. We have 
     \begin{align*}
     \Ext_{\mathcal{O}^{\mathfrak{p}}}^n(N,\ninj(\mu))&= \Ext_{(\mathfrak{g},\mathfrak{a}_0)}^n(N,\ninj(\mu))&& \text{Theorem~\ref{T:cohomology}}\\
     &=\Ext_{(\mathfrak{p}^-,\mathfrak{a}_0)}^n(N,I_{\mathfrak{a}}(\mu)) &&\text{ Frobenius reciprocity}
     \end{align*}
     
      Now consider the relative Hochschild--Serre spectral sequence
    \[
    E_2^{r,s}
    =
    \Ext_{(\mathfrak a,\mathfrak a_{\bar0})}^r
    \left(
        H_s(\mathfrak n^-;N),
        I_{\mathfrak a}(\mu)
    \right)
    \Rightarrow
    \Ext_{(\mathfrak p^-,\mathfrak a_{\bar0})}^{r+s}
    \left(N,I_{\mathfrak a}(\mu)\right).
    \]
    Since \(I_{\mathfrak a}(\mu)\) is injective in
    \(\mathcal C(\mathfrak a,\mathfrak a_{\bar0})\), all terms with
    \(r>0\) vanish. Hence the spectral sequence collapses.       
 \noindent Thus we see
 \begin{align*}
 \Ext_{\mathcal{O}^{\mathfrak p}}^n(N,\ninj(\mu))
 &\cong
 \Hom_{\fa}\left(H_n(\fn^-;N),I_{\fa}(\mu)\right)\\
 &\subset
 \Hom_{\ft_0}\left(H_n(\fn^-;N),I_{\fa}(\mu)\right)\\
 &\subset\Hom_{\ft_0}\left(
 \Lambda_s^n(\fn^-)\otimes N,I_{\fa}(\mu)
 \right).
 \end{align*}
     Thus, every weight in the corresponding Hom-space is of the form
$\nu-\gamma-\psi_-$, where $\nu$ is a weight of $I_{\fa}(\mu)$,
$\gamma$ is a weight of $N$, and $\psi_-$ is a sum of $n$ negative
weights. For the functional $\cH$ defining the principal parabolic $\mathfrak{p}=\mathfrak{a}\oplus\mathfrak{n}^+$,
define the $\cH$-value of a weight to be its pairing with $\cH$.
Since all weights $\nu$ of $I_{\fa}(\mu)$ have the same
$\cH$-value and the $\cH$-values of the weights of $N$ are bounded
above, the equality $\nu=\gamma+\psi_-$ cannot hold for $n\gg0$.
Hence, weight zero occurs in only finitely many degrees, and each
weight-zero space is finite-dimensional.

The second assertion has a similar proof. By
Theorem~\ref{T:cohomology} and Frobenius reciprocity,
\begin{align*}
\Ext_{\mathcal O^{\mathfrak p}}^n(\Dirr(\mu),N)
&\cong
\Ext_{(\mathfrak g,\mathfrak a_{\bar0})}^n(\Dirr(\mu),N)\\
&\cong
\Ext_{(\mathfrak p,\mathfrak a_{\bar0})}^n
(L_{\mathfrak a}(\mu),N).
\end{align*}
The relative Hochschild--Serre spectral sequence
\[
E_2^{r,s}
=
\Ext_{(\mathfrak a,\mathfrak a_{\bar0})}^r
\bigl(L_{\mathfrak a}(\mu),H^s(\mathfrak n^+;N)\bigr)
\Longrightarrow
\Ext_{(\mathfrak p,\mathfrak a_{\bar0})}^{r+s}
\bigl(L_{\mathfrak a}(\mu),N\bigr)
\]
collapses by projectivity of $L_{\mathfrak a}(\mu)$.
Consequently,
\[
\Ext_{\mathcal O^{\mathfrak p}}^n(\Dirr(\mu),N)
\cong
\Hom_{\mathfrak a}
\bigl(L_{\mathfrak a}(\mu),H^n(\mathfrak n^+;N)\bigr).
\]
Now $H^n(\mathfrak n^+;N)$ is a subquotient of
$\bigl(\Lambda_s^n(\mathfrak n^+)\bigr)^*\otimes N$,
whose weights are $\gamma-\psi_+$, where $\gamma$ is a
weight of $N$ and $\psi_+$ is a sum of $n$ positive weights.
The same weight argument as above therefore proves
finite-dimensionality and vanishing for $n\gg0$.
 \end{proof}

\begin{prop}\label{Dirr basis}
Suppose $N\in\mathcal{O}^{\mathfrak{p}}$. Then
\begin{equation}\label{character with c^vee}
\text{ch}_{\ep}(N)=\sum_{\mu\in\Lambda}c^\vee_\mu(N)\text{ch}_{\ep}(\Dirr(\mu))
\end{equation}
If $\mathfrak{a}=\mathfrak{a}_0$, then the dual statement holds:
\begin{equation}\label{character with c}
\text{ch}_{\ep}(N)=\sum_{\mu\in\Lambda}c_\mu(N)\text{ch}_{\ep}(\ninj(\mu))
\end{equation}
\end{prop}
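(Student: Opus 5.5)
We will prove \eqref{character with c^vee} by showing that both sides, viewed as functions of $N$, are additive on short exact sequences, and that they agree on a generating family. The intended family is the costandard modules $\ninj(\lambda)$ themselves, or equivalently modules with a $\ninj$-filtration.

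For additivity, let $0\to N'\to N\to N''\to 0$ be exact in $\cO^\fp$. The long exact sequence for $\Ext^\bullet_{\cO^\fp}(-,\ninj(\mu))$ consists of finite-dimensional spaces and is bounded, by Lemma~\ref{finiteness of Ext with costandard}. Hence $c^\vee_\mu(N)=c^\vee_\mu(N')+c^\vee_\mu(N'')$. Since $\text{ch}_\ep$ is obviously additive, it suffices to prove \eqref{character with c^vee} for a family of modules that generates the Grothendieck group of $\cO^\fp$. Because every $N$ has finite length (Proposition~\ref{Cat O-Liealg prop}(b)), the simples $L(\lambda)$ suffice. We must also check that the sum over $\mu$ is locally finite, i.e. that for each weight only finitely many $\mu$ contribute. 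By the weight argument in the proof of the lemma, $c^\vee_\mu(N)\neq 0$ forces the $\cH$-value of $\mu$ to be bounded above by those of $N$. Moreover, the $\Dirr(\mu)$ have weights bounded above by $\mu$. Together these give local finiteness, exactly as in classical category $\cO$.

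Computing $c^\vee_\mu(L(\lambda))$ directly is the hard part. Instead, we first handle $N=\Dirr(\lambda)$. By Proposition~\ref{prop:exto}, $c^\vee_\mu(\Dirr(\lambda))=\delta_{\lambda\mu}$, so the identity holds trivially for standard modules. It remains to see that the classes $[\Dirr(\lambda)]$ span enough of the Grothendieck group to recover each $[L(\lambda)]$. Since $\Dirr(\lambda)$ has head $L(\lambda)$ and its other composition factors are strictly lower in the proset order, we can try induction on $\lambda$: write $[L(\lambda)]=[\Dirr(\lambda)]-\sum_{\nu<\lambda}[\Dirr(\lambda):L(\nu)][L(\nu)]$ and apply additivity.

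The main obstacle is that the order need not be well founded downward, and the multiplicities $[\Dirr(\lambda):L(\nu)]$ may be nonzero for infinitely many $\nu$, since $\Dirr(\lambda)$ has finite length but $\cH$-values decrease without bound. To handle this, we work weight space by weight space. Fixing a weight $\gamma$ of $N$, only $\mu$ whose $\cH$-value lies between $\gamma$ and the maximum for $N$ can matter, and within a bounded $\cH$-window the induction is finite. Concretely, both sides of \eqref{character with c^vee} are compared coefficientwise, and the induction runs on the $\cH$-value of the top weight.

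For \eqref{character with c} with $\fa=\fa_{\0}$, every $L_\fa(\mu)$ is projective in $\cC_{(\fa,\fa_\0)}$, so Lemma~\ref{finiteness of Ext with costandard} makes $c_\mu$ well defined and additive. By Proposition~\ref{prop:exto}, $c_\mu(\ninj(\lambda))=\delta_{\lambda\mu}$. Since $\ninj(\lambda)$ has socle $L(\lambda)$ with lower composition factors, the same additivity and triangular induction argument applies verbatim. In this case $I_\fa=L_\fa$, and the characters of the $\ninj(\lambda)$ are again unitriangular with respect to the simples.
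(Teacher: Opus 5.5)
Your proposal is correct and follows the same route as the paper. The ingredients match: additivity of $c^\vee_\mu$ (resp.\ $c_\mu$ when $\fa=\fa_{\0}$) from Lemma~\ref{finiteness of Ext with costandard}, the values $c^\vee_\mu(\Dirr(\lambda))=\delta_{\lambda\mu}$ (resp.\ $c_\mu(\ninj(\lambda))=\delta_{\lambda\mu}$) from Ext-orthogonality, and the standard (resp.\ costandard) classes spanning a completion of the Grothendieck group. The paper simply asserts this last ``topological basis'' step, whereas you justify it by truncating at an $\cH$-level and running a finite unitriangular induction. In that induction, triangularity should be measured by $\cH$-value rather than by the parity-sensitive proset order. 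Also, local finiteness of the sum comes out of that same induction (or from finite-dimensionality of the $\cH$-levels of $N$), not from the $\cH$-window alone, which contains infinitely many $\mu$.
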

\begin{proof}
Recall we established Ext-orthogonality between the standard and costandard objects:
\begin{equation*}
\Ext_{\mathcal{O}^{\mathfrak{p}}}^n(\Dirr(\lambda),\ninj(\mu))\simeq\begin{cases}	\mathbb{C}&\text{ if $n=0$ and $\lambda=\mu$}\\ 0&\text{ else}	\end{cases}
\end{equation*}
By Lemma \ref{finiteness of Ext with costandard}, $c^\vee_\mu(M)$ is a finite sum for each $M$. Hence it is a linear functional on the Grothendieck group. Furthermore, $[\Dirr(\mu)]$ form a topological basis of the Grothendieck group and Ext-orthogonality implies
$$c^\vee_\mu(\Dirr(\lambda))=\delta_{\lambda,\mu}.$$
Thus the $c^\vee_\mu$ are coordinate functionals dual to the basis $[\Dirr(\mu)]$ and we conclude
\begin{equation}
[N]=\sum_{\mu\in\Lambda}c^\vee_\mu(N)[\Dirr(\mu)]\Rightarrow\text{ch}_\ep(N)=\sum_{\mu\in\Lambda}c^\vee_\mu(N)\text{ch}_\ep(\Dirr(\mu))
\end{equation}
In the case $\mathfrak{a}=\mathfrak{a}_0$, the category $\mathcal{C}_{\mathfrak{a},\mathfrak{a}_0}$ is semisimple, hence $L_{\mathfrak{a}}(\mu)$ is projective and we may use Lemma \ref{finiteness of Ext with costandard} again to deduce $c_\mu$ induces a linear functional on the Grothendieck group. By Ext-orthogonality, $c_\mu(\ninj(\lambda))=\delta_{\lambda,\mu}$, so the second assertion follows as well.
\end{proof}
The condition $\mathfrak a=\mathfrak a_{\bar0}$ holds for many
familiar parabolics. For example, in $\mathfrak{gl}(m|n)$, take
\(
\mathfrak p=
\left\{
\left(\begin{smallmatrix}A&B\\0&D\end{smallmatrix}\right)
: A\in\mathfrak p_m,\ D\in\mathfrak p_n,\ B\text{ arbitrary}
\right\},
\)
where $\mathfrak p_m\subseteq\mathfrak{gl}(m)$ and
$\mathfrak p_n\subseteq\mathfrak{gl}(n)$ are ordinary parabolics.
Then $\mathfrak a=\mathfrak l_m\oplus\mathfrak l_n$ is purely even,
where $\mathfrak l_m$ and $\mathfrak l_n$ are their respective
Levi subalgebras.

Similarly, for $\mathfrak g=\mathfrak{osp}(m|2n)$, use the
defining form $\operatorname{diag}(I_m,J)$, where
$J=\left(\begin{smallmatrix}0&I_n\\-I_n&0\end{smallmatrix}\right)$.
The principal parabolic defined by
$H=\operatorname{diag}(0_m,I_n,-I_n)$ is
\[
\mathfrak p=
\left\{
\left(\begin{smallmatrix}
A&0&B\\
-B^t&D&S\\
0&0&-D^t
\end{smallmatrix}\right)
:
A\in\mathfrak{so}(m),\quad D\in\mathfrak{gl}(n),\quad
S^t=S,\quad B\in\operatorname{Mat}_{m\times n}(\mathbb C)
\right\}.
\]
Its Levi subalgebra is obtained by setting $B=S=0$, giving
$\mathfrak a\cong\mathfrak{so}(m)\oplus\mathfrak{gl}(n)$.
In particular, $\mathfrak a=\mathfrak a_{\bar0}$.

\begin{ex}
Here we show that the assumption $\mathfrak a=\mathfrak a_0$ in Equation~\eqref{character with c} cannot be omitted in general. Consider the principal block of $\mathfrak{g}=\mathfrak{q}(1)$. This block is equivalent to finite-dimensional representations over $A:=\mathbb{C}[x]/x^2$. The simples are $L=\mathbb{C},\Pi L=\Pi\mathbb{C}$. We find $\Dirr(L)=L$ and $\ninj(L)=\Pi A$ and consequently a minimal projective resolution of $L$ is 

\[
\cdots
\xrightarrow{x}
\Pi A
\xrightarrow{x}
A
\xrightarrow{x}
\Pi A
\xrightarrow{x}
A
\longrightarrow L
\longrightarrow0,
\]
 
\noindent We deduce that
\[
\Ext_A^n(L,L)\cong
\begin{cases}
\mathbb C, & n \text{ even},\\
0, & n \text{ odd},
\end{cases}
\qquad\text{and}\qquad
\Ext_A^n(\Pi L,L)\cong
\begin{cases}
0, & n \text{ even},\\
\mathbb C, & n \text{ odd}.
\end{cases}
\]
Consequently, for $N=L$, the coefficients of
$\operatorname{ch}_{\ep}(\ninj(L))$ and
$\operatorname{ch}_{\ep}(\ninj(\Pi L))$ would be
$1+1+1+\cdots$ and $-1-1-1-\cdots$, respectively.  Hence the right-hand side of
Equation~\eqref{character with c} is not well defined,
whereas the left-hand side is just $1$.

\noindent By contrast, $A$ and $\Pi A$ are injective, while
\[
\Hom_A(L,\Pi A)\cong\mathbb C,
\qquad
\Hom_A(L,A)=0.
\]
Thus $c^\vee_L(L)=1$ and $c^\vee_{\Pi L}(L)=0$, and
Equation~\eqref{character with c^vee} gives
\[
\operatorname{ch}_{\ep}(L)=1\cdot1+0\cdot\ep=1.
\]
\end{ex}

We anticipate that Equation~\ref{character with c} admits a
$q$-analogue for general $\mathcal O^{\mathfrak p}$, perhaps
with coefficients
$c_{q,\mu}(N):=\sum_{j\geq0}(-q)^j
\dim\operatorname{Ext}^{j}_{\mathcal O^{\mathfrak p}}
\bigl(\Delta_{\mathrm{irr}}(\mu),N\bigr)$,
such that the resulting character expression has a well-defined
limit as $q\to1$, recovering $\operatorname{ch}_{\epsilon}(N)$.

We conclude by noting that Proposition~\ref{Dirr basis} places the
coefficients $c_\mu^\vee(M)$ and $c_\mu(M)$ in direct analogy with
the Euler characteristics underlying Kazhdan--Lusztig theory for
ordinary category $\mathcal O$; see
\cite[Proposition~6.14 and Theorem~8.11(b)]{Hum08}.
Thus it is natural to expect the existence of a \textit{super} Hecke algebra governing these Kazhdan-Lusztig polynomials. In particular,
it would be interesting to extend Brundan's canonical-basis
approach \cite{Br03,Br04} to the general categories
$\mathcal O^{\mathfrak p}$ considered here.

\end{document}